\pgfplotsset{compat=1.17}
\declaretheorem[name=Theorem, parent=section]{theorem}
\declaretheorem[name=Corollary, sibling=theorem]{corollary}
\declaretheorem[name=Lemma, sibling=theorem]{lemma}
\declaretheorem[name=Lemma, numbered=no]{lemma*}
\theoremstyle{definition}
\declaretheorem[name=Definition, sibling=theorem]{definition}
\declaretheorem[name=Remark, sibling=theorem]{remark}
\declaretheorem[name=Problem, sibling=theorem]{problem}
\theoremstyle{remark}
\newcommand{\RR}{\mathbb{R}}
\newcommand{\QQ}{\mathbb{Q}}
\newcommand{\NN}{\mathbb{N}}
\newcommand{\ZZ}{\mathbb{Z}}
\let\eps\varepsilon
\newcommand{\open}{\mathrm{o}}
\DeclareMathAccent{\wtilde}{\mathord}{largesymbols}{"65}
\newcommand{\alphag}{\alpha_{\mathrm{grp}}}
\newcommand{\Thetag}{\Theta_{\mathrm{grp}}}
\definecolor{DarkGreen}{cmyk}{1,0,0.6,0.2}
\definecolor{DarkGreen2}{RGB}{46,148,0}
\newcommand{\vect}[1]{#1}
\title{\vspace{-2.5em}A group-theoretic approach to Shannon capacity of graphs and a limit theorem from lattice packings}
\author[1]{Pjotr Buys}
\author[2]{Sven Polak}
\author[1]{Jeroen Zuiddam}
\affil[1]{University of Amsterdam}
\affil[2]{Tilburg University}
\date{\today}
\begin{document}
\maketitle
\small
\centerline{\textbf{Abstract}}
\vspace{0.5em}
We develop a group-theoretic approach to the Shannon capacity problem. Using this approach we extend and recover, in a structured and unified manner, various families of previously known lower bounds on the Shannon capacity.
Bohman (2003) proved that, in the limit $p\to\infty$, the Shannon capacity of cycle graphs~$\Theta(C_p)$ converges to the fractional clique covering number, that is, $\lim_{p \to \infty} p/2 - \Theta(C_p) = 0.$ We strengthen this result by proving that the same is true for all fraction graphs: $\lim_{p/q \to \infty} p/q - \Theta(E_{p/q}) = 0.$ Here the fraction graph~$E_{p/q}$ is the graph with vertex set $\ZZ/p\ZZ$ in which two distinct vertices are adjacent if and only if their distance mod $p$ is strictly less than~$q$. 
We obtain the limit via the group-theoretic approach. In particular, the independent sets we construct in powers of fraction graphs are subgroups (and, in fact, lattices). %
Our approach circumvents known barriers for structured (``linear'') constructions of independent sets of Calderbank--Frankl--Graham--Li--Shepp (1993) and Guruswami--Riazanov (2021).

\normalsize
\section{Introduction}
\label{sec:intro}

Determining the Shannon capacity of graphs, introduced by Shannon in~1956 to model zero-error communication over noisy channels \cite{MR0089131}, is a long-standing open problem in discrete mathematics, information theory and combinatorial optimization \cite{MR1658803, MR1919567, MR1956924}.
In graph-theoretic terms, this parameter measures the rate of growth of the largest independent set in strong powers of a graph. 

The Shannon capacity has been studied from many angles, which led to a variety of upper bound methods (e.g., the Lovász theta function \cite{Lovasz79} and (fractional) Haemers bound \cite{haemers1979some, Bukh18}), lower bound constructions---which have been mostly ad hoc---(e.g., \cite{MR0337668, bohman2005limit, MR1967195, MR3906144, romera-paredes_mathematical_2024}), and structural results \cite{alon1998shannon, MR2234473, MR4039606, MR4357434,  wigderson2022asymptotic}. 
Determining the Shannon capacity is notoriously hard; the Shannon capacity is not known for odd cycles of length $\geq7$, for instance.

Inspired by recent ``orbit'' constructions for (the best-known) lower bounds on the Shannon capacity of small odd cycles \cite{MR3906144, deboer2024asymptotic}, we develop a \emph{group-theoretic approach} to Shannon capacity (which for the class of graphs that we focus on is about constructing lattices with good properties). The main idea of this approach is to construct independent sets in Cayley graphs that are subgroups. Indeed, Shannon's~\cite{MR0089131} famous independent set in the square of the five-cycle has such a form: 
\[
\{ t \cdot (1,2) : t \in \ZZ_5\} \subseteq \ZZ_5^2.
\]
How can we find such constructions? Can they give good lower bounds on Shannon capacity? We make progress on both questions.

We focus on a family of Cayley graphs that  naturally generalizes odd cycle graphs, namely Cayley graphs for groups $\ZZ_p = \ZZ/p\ZZ$ and generating sets $\{\pm 1,\ldots, \pm (q-1)\}$.
Such graphs, which we refer to as fraction graphs, have indeed played a central role in aforementioned orbit constructions for small odd cycles, in particular the seven cycle and fifteen cycle. In fact, it will also be natural to work with the infinite version of these graphs, where the group is the circle group, which we refer to as circle graphs.

Subgroup independent sets in fraction graphs and circle graphs correspond, as we will see, precisely to lattices with good size and distance properties, giving rise to a \emph{lattice approach} to lower bounding the Shannon capacity of such graphs, reminiscent of optimal sphere packing constructions \cite{MR3664816, MR3664817, campos2023newlowerboundsphere}. The Shannon capacity problem on fraction graphs can indeed be rephrased as the problem of packing cubes with specified size in a high-dimensional unit-length torus \cite{MR0337668, MR1401751}.

Central directions in this paper are: (1) methods for proving that a given lattice is good for the lattice approach, (2) a construction of a family of good lattices and as a result many new lower bounds on the Shannon capacity, and (3) a proof that these lattice constructions are optimal in the limit of large fraction graphs.

We summarize our main results here and discuss them in more detail in the rest of the introduction:
\begin{itemize}
    \item \textbf{Group-theoretic approach.} We develop a group-theoretic approach to the Shannon capacity problem, in particular for fraction graphs and circle graphs, in which case it requires   
    constructing lattices with good size and distance properties. 
    While similar in spirit to the linear Shannon capacity of Guruswami and Riazanov~\cite{LinearShannonCapacity}, our approach avoids related barriers (by identifying equivalent fraction graphs) \cite{MR1210400, LinearShannonCapacity}. %
    \item \textbf{New Shannon capacity lower bounds via lattices.} We construct an explicit family of lattices suitable for the above approach and thus obtain new lower bounds for the Shannon capacity of fraction graphs. These constructions extend and recover, in a structured and unified manner, previous lower bounds of Polak and Schrijver \cite[Theorem~9.3.3]{SvenThesis} and Baumert, McEliece, Rodemich, Rumsey, Stanley and Taylor~\cite{MR0337668}. 
    \item \textbf{The Bohman limit via lattices.} Using our Shannon capacity lower bounds, we extend the Bohman limit theorem  \cite{MR1991769, bohman2005limit} from odd cycle graphs to all fraction graphs. In particular, this proves that the group-theoretic (lattice) approach is strong enough to reach the Shannon capacity in the limit of large cycle graphs and fraction graphs.
\end{itemize}

\subsection{Shannon capacity, odd cycles, fraction graphs}
The Shannon capacity of a graph $G$ is defined as $\Theta(G) = \sup_n \sqrt[n]{\alpha(G^{\boxtimes n})}$, where~$\alpha$ denotes the independence number and $\boxtimes$ denotes the strong product of graphs.\footnote{The strong product~$G \boxtimes H$ is the graph with vertex set the cartesian product $V(G) \times V(H)$ and edge set $E(G\boxtimes H) = \{\{(a,x),(b,y)\} : (a=b \vee \{a,b\} \in E(G)) \wedge (x=y \vee \{x,y\} \in E(H)) \}$.} The supremum may equivalently be replaced by a limit (by Fekete's lemma). 

For every $p \in \NN$ let $C_p$ denote the cycle graph with $p$ vertices. For even $p$, it is not hard to see that $\Theta(C_p) = p/2$. Shannon \cite{MR0089131} determined $\Theta(G)$ for all graphs $G$ with at most six vertices, except for the five cycle $C_5$, for which he famously proved $\alpha(C_5^{\boxtimes 2}) \geq 5$ (and thus $\Theta(C_5) \geq \sqrt{5}$) using the independent set $\{t \cdot (1,2) : t \in \ZZ_5\}$ and $\Theta(C_5) \leq 5/2$ via an upper bound method that we now refer to as the fractional clique covering number (and which we denote by $\overline{\chi_f}$). Lovász \cite{Lovasz79} in breakthrough work introduced the Lovász theta function (denoted by $\vartheta$), proved that it upper bounds the Shannon capacity, and used this to determine $\Theta(C_5) = \vartheta(C_5) = \sqrt{5}$. For larger odd cycles, the Shannon capacity is not known. In all cases the Lovász theta function gives the best-known upper bound. For instance, for the seven-cycle we know $ 3.2578\approx 367^{1/5} \leq \Theta(C_7) \leq \vartheta(C_7) = (7\cos(\pi/7))/(1+\cos(\pi/7))\approx 3.3177$ \cite{MR3906144}.

For every $p,q \in \NN$ we let $E_{p/q}$ be the graph with vertex set $\ZZ_p=\ZZ/p\ZZ$ in which two distinct vertices are adjacent if and only if their distance mod $p$ is strictly less than~$q$. We refer to these graphs as the \emph{fraction graphs}. For example, $E_{p/2} = C_p$ is the cycle graph on $p$ vertices, and $E_{p/1} = \overline{K_p}$ is the graph with no edges on $p$ vertices. For any graph $G,H$ we write $G \leq H$ if there is a cohomomorphism $G \to H$ (a map $V(G) \to V(H)$ that maps non-edges to non-edges). The fraction graphs are totally ordered: $E_{a/b} \leq E_{c/d}$ if and only if $a/b \leq c/d$ \cite[Theorem~6.3]{MR2089014}. In particular, equivalent fractions give equivalent fractions graphs in the cohomomorphism order. The fractional clique covering number is given by $\overline{\chi_f}(E_{p/q}) = p/q$ \cite[Cor.~6.20, 6.24]{MR2089014}.

Several upper bounds on the Shannon capacity are known, including the (fractional) clique covering number $\overline{\chi_f}$, the Lovász theta function \cite{Lovasz79} and the (fractional) Haemers bound \cite{haemers1979some, Bukh18}. Theory (asymptotic spectrum duality and distance) has been developed about such upper bounds and their structure \cite{wigderson2022asymptotic, deboer2024asymptotic}. 
In particular, \cite{deboer2024asymptotic} developed a graph limit approach to study Shannon capacity, in which fraction graphs played a central role (to construct converging sequences of graphs). 
It was also observed there that the best-known constructions for lower bounds on Shannon capacity of small odd cycles can all be obtained using ``orbit'' constructions in fraction graphs close to those odd cycle graphs.

\subsection{Bohman limit for fraction graphs}
Bohman \cite{MR1991769, bohman2005limit} proved that, in the limit $p\to\infty$, the Shannon capacity of cycle graphs $\Theta(C_p)$ converges to the fractional clique covering upper bound:
\begin{theorem}\label{eq:intro-1}
$\lim_{p \to \infty} p/2 - \Theta(C_p) = 0.$
\end{theorem}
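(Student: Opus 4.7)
The upper bound $\Theta(C_p) \leq p/2$ is immediate from $\overline{\chi_f}(C_p) = p/2$ together with the standard inequality $\Theta \leq \overline{\chi_f}$. For even $p$, taking every other vertex gives an independent set of size $p/2$, so $\Theta(C_p) = p/2$ exactly. The substance of the theorem is therefore the lower bound for odd $p$: to show that for every $\epsilon > 0$ there exists $p_0$ such that for every odd $p \geq p_0$ some strong power $C_p^{\boxtimes n}$ contains an independent set of size at least $(p/2 - \epsilon)^n$.

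The plan is to seek such an independent set as a subgroup $L \leq \ZZ_p^n$, in the spirit of the group-theoretic approach highlighted in the introduction. The independence condition translates into: no nonzero element of $L$ has all coordinates in $\{-1, 0, 1\} \pmod p$. Lifting to $\ZZ^n$ and rescaling by $1/p$, this amounts to constructing a lattice $\Lambda \subseteq \RR^n$ with $\ZZ^n \subseteq \Lambda \subseteq (1/p)\ZZ^n$, having Chebyshev minimum distance at least $2/p$ on the torus $(\RR/\ZZ)^n$, and with $[\Lambda : \ZZ^n] = |L|$ at least $(p/2 - \epsilon)^n$. For even $p$ the axis-aligned choice $\Lambda = (2/p)\ZZ^n$ meets all three conditions exactly; for odd $p$ it fails since $(2/p)\ZZ^n \not\supseteq \ZZ^n$, and the construction must ``tilt.''

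The key step is to exhibit a suitable tilted lattice for large odd $p$. A first attempt would take $\Lambda$ generated by the vectors $(2/p) e_i$ in most coordinate directions together with a small number of twist generators whose sole purpose is to force $\ZZ^n \subseteq \Lambda$; tensoring or iterating the resulting construction should then amplify the index toward $(p/2)^n$. Verifying the minimum distance on the torus reduces to a concrete diophantine condition on the twist vectors, and the index loss relative to $(p/2)^n$ should be polynomial in $p$, which is negligible after extracting $n$-th roots and letting $n \to \infty$. Once such a family is in hand, the theorem follows: $\Theta(C_p) \geq \alpha(C_p^{\boxtimes n})^{1/n} \geq |L|^{1/n} \geq p/2 - \epsilon$, and $\epsilon$ was arbitrary.

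The \textbf{main obstacle} is producing such a family of tilted lattices explicitly and verifying both the minimum distance and the density bound uniformly in $p$. The known barriers for structured constructions \cite{MR1210400, LinearShannonCapacity} show that a single linear constraint mod $p$ cannot reach density $2^{-n}$ while avoiding $\{-1,0,1\}^n$, so the construction must make genuine use of higher-rank sublattices together with the freedom to identify equivalent fraction graphs, as emphasized in the paper.
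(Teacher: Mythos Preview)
Your proposal is a reasonable outline of the strategy, but it is not a proof: the entire difficulty lies in the step you yourself flag as ``the main obstacle,'' namely exhibiting an explicit family of lattices with the required minimum distance and index, and you do not carry this out. The sketch of a ``first attempt'' (axis-aligned generators $(2/p)e_i$ plus unspecified twist vectors, then ``tensoring or iterating'') is too vague to evaluate, and in fact tensoring a fixed construction does not improve the $n$th root of the index, so that part of the plan cannot work as stated. Everything prior to that point---the reduction to subgroups of $\ZZ_p^n$ avoiding $\{-1,0,1\}^n$, the lift to $p$-ary lattices, the observation that the even case is trivial---is correct but standard; the theorem is not proved until a concrete lattice is produced and its minimum distance is verified.

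In the paper this gap is filled by an explicit construction. For parameters $(n,k,b,r,s)$ one sets $A = qI_n + X$, where $X$ is (a diagonal conjugate of) the matrix $M_\alpha$ with $\alpha$'s above the diagonal and $-1$'s below; the key device is to show that $M_\alpha$ is a $P_0$-matrix for $\alpha\in[0,1]$, from which $\lambda_\infty(\mathcal{L}(A))\geq q$ follows without any delicate diophantine analysis (\autoref{lem:inf-dist-lb}, \autoref{lem:M_alpha}, \autoref{th:main-constr}). The companion matrix $B$ satisfies $AB=pI_n$ and $\det B = p$, giving $\alphag(E_{p/q}^{\boxtimes n})\geq p$ (\autoref{cor:construction}). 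The limit then follows from an elementary estimate on the deficit $\Delta(n,k,b,r,s)\leq 6b^2/n$ (\autoref{lem:Delta_bound}) together with a density argument choosing parameters so that $a/b$ approximates any large $x$. Specializing to $b=2$ (and using that $E_{p/2}=C_p$) recovers Bohman's theorem. The $P_0$-matrix criterion is precisely the missing ingredient that lets one certify the minimum distance of a large, genuinely non-axis-aligned family of lattices uniformly in the parameters; your proposal would need something playing this role.
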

This was done by constructing a sequence of large independent sets in powers of odd cycles, in \cite{MR1991769} by a direct construction based on an earlier construction of Hales, and in \cite{bohman2005limit} using an expansion process of \cite{MR0337668}.
We strengthen \autoref{eq:intro-1} by proving that the same is true for all fraction graphs:
\begin{theorem}\label{eq:intro-2}
    $\lim_{p/q \to \infty} p/q - \Theta(E_{p/q}) = 0.$
\end{theorem}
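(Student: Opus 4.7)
The plan is to prove the limit via the group-theoretic (lattice) approach introduced in the paper. Since the upper bound $\Theta(E_{p/q}) \leq \overline{\chi_f}(E_{p/q}) = p/q$ is already known, it suffices to construct, for each fraction $p/q$, independent sets in powers $E_{p/q}^{\boxtimes n}$ with $\alpha(E_{p/q}^{\boxtimes n})^{1/n}$ approaching $p/q$ as $p/q \to \infty$.

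First, I would reformulate the task as a lattice cube-packing problem. A subgroup independent set in $E_{p/q}^{\boxtimes n}$ corresponds, as described earlier in the paper, to a sublattice $L \subseteq \ZZ^n$ containing $p\ZZ^n$ all of whose nonzero cosets in $L/p\ZZ^n$ have $\ell_\infty$-torus distance at least $q$ from the origin. Such a lattice yields the bound $\alpha(E_{p/q}^{\boxtimes n}) \geq p^n/[\ZZ^n : L]$, and rescaling by $1/p$ reinterprets the problem as packing $\ell_\infty$-balls of radius just under $q/p$ in the unit torus $(\RR/\ZZ)^n$. In the limit $p/q \to \infty$ the cubes shrink relative to the torus, so one expects lattice packings with density approaching $1$, corresponding to independent sets with capacity approaching $p/q$. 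It may also be convenient to pass first to the circle graph (infinite) version and then discretise, as the paper's setup suggests.

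Next, I would apply the explicit lattice family constructed earlier in the paper, which extends and unifies the Baumert--McEliece--Rodemich--Rumsey--Stanley--Taylor and Polak--Schrijver constructions and recovers Bohman's bound for cycles. For each $p/q$ and each $n$, this family produces an independent set of size at least $(p/q)^n (1 - \delta(p,q,n))$ in $E_{p/q}^{\boxtimes n}$. The theorem then reduces to showing $\inf_n \delta(p,q,n) \to 0$ as $p/q \to \infty$. For cycles ($q=2$) this is precisely Bohman's theorem, proved by iterating the expansion process of Baumert et al.\ while carefully tracking how the gap to $p/2$ shrinks at each step.

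The main obstacle will be showing this iteration works uniformly in $q$. Bohman's analysis is tailored to $q=2$ and exploits cycle geometry directly; here $q$ is allowed to grow with $p$, so one must guarantee that the rate at which the gap $p/q - \alpha(E_{p/q}^{\boxtimes n})^{1/n}$ shrinks does not deteriorate as $q$ increases. The lattice framework is well-suited for this because $p$ and $q$ enter symmetrically as parameters of the distance constraint rather than as ingredients of an ad hoc cycle construction; this should allow the expansion/iteration argument to be formulated in a $q$-uniform way, and together with choosing the dimension $n = n(p,q)$ to grow appropriately with $p/q$, it should yield the desired limit.
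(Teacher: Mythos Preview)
Your high-level direction is right---use the lattice family of \autoref{prop:construction-intro} together with the upper bound $\Theta(E_{p/q})\le p/q$---but the mechanism you sketch diverges from the paper's and contains a real gap.

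First, a mischaracterization: the family of \autoref{prop:construction-intro} does \emph{not} produce, for each triple $(p,q,n)$, an independent set of size $(p/q)^n(1-\delta)$ in $E_{p/q}^{\boxtimes n}$. Rather, each choice of parameters $(n,k,b,r,s)$ determines specific integers $p,q$ via \autoref{def:pqdef} and yields $\alphag(E_{p/q}^{\boxtimes n})\ge p$. To reach an arbitrary target ratio $x$ one must argue that parameters can be chosen so that the associated $a/b$ lands within $1/b$ of $x$ (with $p/q\le a/b$ by \autoref{lem:pqdef}\ref{eq:num3}), and then invoke monotonicity of $\Thetag$ on circle graphs (\autoref{lem:grp-shannon-nondecreasing}) to transfer the bound. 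This indirection is essential and absent from your outline.

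Second---and this is the main gap---you propose to obtain the limit by ``iterating the expansion process of Baumert et al.'' and then showing the iteration is uniform in $q$. The paper does not iterate at all; its proof is a single-shot application of \autoref{prop:construction-intro} in a specific parameter regime. Given $\varepsilon>0$, one fixes an integer $b$ with $1/\varepsilon<b\le 2/\varepsilon$, takes $k=1$, and for large $x$ writes $m=\lfloor x\rfloor$, $r=\lfloor b(x-m)\rfloor$, $s=m-b^{n_m}$ with $n_m$ the largest integer satisfying $m\ge b^{n_m}$. A direct estimate (\autoref{lem:Delta_bound}) then gives
\[
\frac{a}{b}-p^{1/n}\;\le\;\Delta(n_m+1,1,b,r,s)\;\le\;\frac{6b^2}{n_m+1}\;\le\;\frac{24}{n_m\,\varepsilon^2},
\]
which tends to $0$ as $x\to\infty$ since $n_m\to\infty$. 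The paper moreover remarks explicitly that the earlier constructions ``even combined with `expansion methods' of \cite{MR0337668}'' are, to the authors' knowledge, \emph{not} sufficient to obtain \autoref{th:intro-lattice-limit}. So your proposed iteration route is at best an open problem rather than a plan; the missing idea is precisely the parameter regime above and the clean $O(b^2/n)$ bound it yields.
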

Indeed, \autoref{eq:intro-2} implies \autoref{eq:intro-1}, since $E_{p/2} = C_p$.

\subsection{Group-theoretic approach to Shannon capacity}

We strengthen \autoref{eq:intro-2} further by showing that we can obtain the limit via a group-theoretic approach. In other words, the independent sets leading to the limit we will construct as subgroups. 
For any Cayley graph $H$, let $\alphag(H)$ be the size of the largest independent set $S$ of $H$ that is a subgroup, and let $\Thetag(H) = \sup_n \alphag(H^{\boxtimes n})^{1/n}$. We will refer to these quantities as the \emph{subgroup independence number} and the \emph{subgroup Shannon capacity}.
We prove the following: 
\begin{theorem}\label{th:intro-lattice-limit}
For every $\eps > 0$, for all $p/q \in \QQ$ large enough, there are $a,b \in \NN$ such that $a/b = p/q$ and $p/q - \Thetag(E_{a/b}) < \eps$. 
\end{theorem}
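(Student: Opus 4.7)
The plan is to reduce the statement to a dense lattice packing problem and then invoke the paper's main lattice construction. The upper bound $\Thetag(E_{a/b}) \leq p/q$ is immediate for any equivalent fraction $a/b = p/q$: equivalent fractions give mutually cohomomorphic fraction graphs, so $\Theta(E_{a/b}) = \Theta(E_{p/q}) \leq \overline{\chi_f}(E_{p/q}) = p/q$, and $\Thetag \leq \Theta$ by definition. So the real task is to construct, for each sufficiently large $p/q$, integers $a = kp$, $b = kq$, $n$, and a subgroup $H \leq \ZZ_a^n$ of size at least $(p/q - \eps)^n$ that is independent in $E_{a/b}^{\boxtimes n}$.

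Next I would set up the lattice dictionary. Such a subgroup $H$ corresponds bijectively to a sublattice $\Lambda \subseteq \ZZ^n$ with $a\ZZ^n \subseteq \Lambda$ and no nonzero lattice point in the open cube $(-b, b)^n$, and it satisfies $|H| = a^n / \det(\Lambda)$. Reaching size $(p/q - \eps)^n$ is therefore equivalent to finding $\Lambda$ with $\det(\Lambda)^{1/n}$ close to $b$, i.e., a lattice whose cube packing in the torus $(\RR/a\RR)^n$ has density approaching the Minkowski upper bound of $1$.

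The third step is to produce such lattices by invoking the explicit lattice family constructed in the main body of the paper (extending Polak--Schrijver and Baumert--McEliece--Rodemich--Rumsey--Stanley--Taylor). The intuition is that a real lattice $\Lambda_0 \subseteq \RR^n$ of covolume close to $q^n$ with $\ell_\infty$-minimum at least $q$ can, after scaling by a large multiplier $k$ and rounding, be faithfully realized as a sublattice of $\ZZ^n$ containing $(kp)\ZZ^n$, preserving its packing density up to an error that tends to zero as $k \to \infty$. The freedom to replace $E_{p/q}$ by the equivalent finer graph $E_{kp/kq}$ is precisely what makes this rounding admissible: it supplies the extra divisibility and resolution that a rigid lattice in $\ZZ_p^n$ cannot offer. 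Choosing $n$ large enough (relative to $\eps$) and then $k$ large enough (relative to $n$ and $\eps$) delivers the required $\Lambda$, and hence the required subgroup $H$.

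The main obstacle, as expected, is step three: exhibiting a family of lattices whose torus cube-packing density approaches $1$ as the dimension grows. This is where the paper's new lattice construction carries the real content, marrying number-theoretic bookkeeping (to arrange $(kp)\ZZ^n \subseteq \Lambda$ with prescribed covolume) to an expansion / densification argument in the spirit of Baumert--McEliece--Rodemich--Rumsey--Stanley--Taylor and Bohman. The tactical move of working inside $E_{kp/kq}$ rather than $E_{p/q}$ itself is not cosmetic but the key conceptual step: it circumvents the Calderbank--Frankl--Graham--Li--Shepp and Guruswami--Riazanov barriers on linear constructions in $E_{p/q}$, and so opens the door to density-$1$ subgroup packings in some finer, equivalent fraction graph.
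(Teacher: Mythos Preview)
Your outline correctly identifies the architecture---reduce to lattices via the dictionary of \autoref{lem:matrixtolattice}, then invoke the construction of \autoref{prop:construction-intro}---and your first two steps match the paper. But step three, where all the content lives, is deferred rather than executed, and the mechanism you sketch for it does not match what the paper actually does.

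The paper's derivation (\autoref{sec:limit}) is concrete: it sets $\Delta(n,k,b,r,s) = a/b - p^{1/n}$ for the parameters of \autoref{prop:construction-intro}, proves the explicit bound $\Delta \leq 6b^2/n$ under mild constraints (\autoref{lem:Delta_bound}), and then, given $\eps$, fixes an integer $b \in (1/\eps, 2/\eps]$ and shows that every $x \geq b^n$ lies within $1/b$ of some $(bm+r)/b$ realizable by the construction with parameters $(n_m+1,1,b,r,m-b^{n_m})$, yielding $\Delta \leq 24/(n\eps^2)$. There is no ``scale a real lattice $\Lambda_0$ by a large $k$ and round'' step: the lattices $\mathcal{L}(A)$ are integer from the outset, and the passage to equivalent fractions enters through the circle-graph equivalence (Lemmas~\ref{lem:map-subgroup-up}--\ref{lem:grp-shannon-nondecreasing}), not through a rounding argument. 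More seriously, your rounding intuition is circular as stated: you posit a real $\Lambda_0$ of covolume close to $q^n$ with $\ell_\infty$-minimum at least $q$, but exhibiting such a lattice with covolume ratio tending to $1$ \emph{is} the problem. Without a quantitative statement playing the role of \autoref{lem:Delta_bound}, ``choosing $n$ large enough and then $k$ large enough'' remains an assertion, not an argument.
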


\begin{remark}\label{rem:alphag}
In contrast to the independence number~$\alpha$, the subgroup independence number~$\alphag$ can take different values on fraction graphs that are equivalent under cohomomorphism, that is,  $\alphag(E_{\smash{p/q}}^{\boxtimes n})$ is not necessarily equal to $\alphag(E_{\smash{a/b}}^{\boxtimes n})$ if~$a/b=p/q$. For instance, $\alphag(E_{5/2}) = 1$, while $\alphag(E_{10/4}) = 2$. 
Moreover, we note that $\alpha(E_{\smash{p/q}}^{\boxtimes n})$ is not necessarily equal to $\sup_{a,b: a/b\leq p/q} \alphag(E_{\smash{a/b}}^{\boxtimes n})$. Indeed, it is known that $\alpha(E_{\smash{8/3}}^{\boxtimes 3}) = 12$, while $\alphag(E_{\smash{p/q}}^{\boxtimes 3}) < 12$ for all $p/q \leq 8/3$ \cite[Remark~6.12]{deboer2024asymptotic}.
\end{remark}
\begin{remark}
Calderbank et al.~\cite{MR1210400} and Guruswami--Riazanov~\cite{LinearShannonCapacity} proved a barrier result for constructing large independent sets as subspaces, which in our language implies that for $p$ prime we have~$\Theta_{\text{grp}}(E_{p/q}) \leq p^{1-{2(q-1)}/{(p-1)}}$.  For equivalent fractions $a/b=p/q$, this barrier does not necessarily apply to~$E_{a/b}$. It is thus natural to consider all~$a/b$ with~$a/b=p/q$, as we do in our approach (\autoref{th:intro-lattice-limit}). We note that $\sup_{a,b: \, a/b=p/q} \Thetag(E_{a/b})$ is nondecreasing in~$p/q$, as we will see in \autoref{sec:limit}.
\end{remark} 

\subsection{Circle graphs}
Instead of working with the fraction graphs, our results and constructions can naturally be thought of as taking place in infinite graphs on the circle, as follows. 

Let $\mathbb{T} = \mathbb{R}/\mathbb{Z}$ denote the circle group. For $x \geq 2$ let $E_x^{\open}$ be the infinite graph with vertex set $\mathbb{T}$ for which any two distinct vertices are adjacent if and only if their distance modulo $1$ is strictly less than $1/x$. Note that $E_x^{\open}$ is a Cayley graph on $\mathbb{T}$. We prove:

\begin{restatable}{theorem}{circlelimit}
\label{th:intro-circle-limit}
    $\lim_{x \to \infty} x - \Thetag(E_{x}^\open) = 0$.
\end{restatable}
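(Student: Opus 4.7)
The plan is to derive \autoref{th:intro-circle-limit} from the fraction-graph version \autoref{th:intro-lattice-limit}, viewing fraction graphs as sitting inside circle graphs through the natural embedding of cyclic subgroups of $\mathbb{T}$. The upper bound $\Thetag(E_x^\open) \leq x$ should come first, and it follows from the standard cube-packing argument: if $S \subseteq \mathbb{T}^n$ is independent in $E_x^{\open \boxtimes n}$, then for any two distinct $s,s' \in S$ some coordinate of $s-s'$ has distance $\geq 1/x$ from $0$, so the translates $s + [0,1/x)^n$ are pairwise disjoint subsets of $\mathbb{T}^n$, giving $|S|(1/x)^n \leq 1$, hence $\alpha(E_x^{\open \boxtimes n})^{1/n} \leq x$ and in particular $\Thetag(E_x^\open)\leq x$.

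For the lower bound, the key observation is the comparison $\Thetag(E_{a/b}) \leq \Thetag(E_x^\open)$ whenever $a/b \leq x$. I would prove this via the group homomorphism $\iota\colon \ZZ_a \to \mathbb{T}$, $k \mapsto k/a$, extended coordinatewise to $\ZZ_a^n \to \mathbb{T}^n$. If $S \leq \ZZ_a^n$ is an independent subgroup of $E_{a/b}^{\boxtimes n}$, then for distinct $s,s' \in S$ some coordinate of $s - s'$ has $\ZZ_a$-distance $\geq b$, so the corresponding coordinate of $\iota(s) - \iota(s')$ has $\mathbb{T}$-distance $\geq b/a \geq 1/x$; hence $\iota(S)$ is an independent subgroup of $\mathbb{T}^n$ of size $|S|$ in $E_x^{\open \boxtimes n}$, which yields the comparison.

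With these two ingredients in place, the limit follows by rational approximation. Fix $\eps > 0$. For $x$ large, pick a rational $p/q$ with $x - \eps/2 \leq p/q \leq x$ and $p/q$ large enough to invoke \autoref{th:intro-lattice-limit}, producing $a,b \in \NN$ with $a/b = p/q$ and $p/q - \Thetag(E_{a/b}) < \eps/2$. Combining with the comparison gives
\[
x - \Thetag(E_x^\open) \;\leq\; (x - p/q) + \bigl(p/q - \Thetag(E_{a/b})\bigr) \;<\; \eps,
\]
which together with $\Thetag(E_x^\open) \leq x$ proves the statement. The only genuine content here is \autoref{th:intro-lattice-limit}; everything else is a careful bookkeeping of inequalities, and the main thing to watch is the direction of the comparison $a/b \leq x$ (so that distances expand, not contract, under $\iota$) and the ability to simultaneously make $p/q$ close to $x$ and large enough for the fraction-graph limit to apply, which is automatic for $x \to \infty$.
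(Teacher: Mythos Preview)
Your reduction is sound: the embedding $\iota\colon \ZZ_a \to \mathbb{T}$, $k \mapsto k/a$, does give $\Thetag(E_{a/b}) \leq \Thetag(E_x^\open)$ whenever $a/b \leq x$ (this is essentially \autoref{lem:circle-graph-finite-graph-iso} together with \autoref{lem:grp-shannon-nondecreasing} in the paper), the cube-packing bound $\Thetag(E_x^\open) \leq x$ is correct, and the rational-approximation step is fine.

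The discrepancy with the paper is in the dependency structure, and it matters. The paper does \emph{not} prove \autoref{th:intro-lattice-limit} independently and then deduce \autoref{th:intro-circle-limit}. Instead, Section~4.1 shows the two statements are equivalent, and Section~4.2 proves \autoref{th:intro-circle-limit} \emph{directly} from the lattice construction (\autoref{cor:construction}) via an explicit choice of parameters $(n,k,b,r,s)$ and a quantitative bound on the error function $\Delta(n,k,b,r,s)$; \autoref{th:intro-lattice-limit} is obtained only afterwards, as a corollary of the equivalence. So if you appeal to \autoref{th:intro-lattice-limit} as a result established by the paper, your argument is circular. Nothing prevents one from proving \autoref{th:intro-lattice-limit} first, directly from \autoref{cor:construction} (which is already phrased for fraction graphs), and then running your reduction---that alternative organization is perfectly valid---but you have not supplied that step, and the paper does not supply it either in that order.
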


In fact, it can be shown that \autoref{th:intro-circle-limit} is equivalent to \autoref{th:intro-lattice-limit} using that fraction graphs naturally are induced subgraphs of circle graphs.

\subsection{Our construction}

We now describe the bounds that we obtain on the subgroup independence number of powers of fraction graphs from which we will derive \autoref{th:intro-lattice-limit}, and relate them to earlier work. 
\begin{theorem}\label{prop:construction-intro}
    For nonnegative integers~$n$, $k$, $b$, $r$, $s$ with $n,k,b \geq 1$ and $r \leq b$, define
    \[
        a = k\cdot b^n + s \cdot b + r,
        \quad
        p = \displaystyle\frac{s^n r + k a^n}{r + k b^n},
        \quad\text{ and }\quad
        q = \displaystyle\frac{s^{n-1} r + kba^{n-1}}{r + kb^n}.
    \]
Then
    \[
        \alphag(E_{p/q}^{\boxtimes n}) \geq p.
    \]
\end{theorem}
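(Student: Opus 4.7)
The plan is to exhibit an explicit lattice $L \subseteq \mathbb{Z}^n$ containing $p\mathbb{Z}^n$, with $[\mathbb{Z}^n : L] \leq p^{n-1}$ (so the image subgroup $S := L/p\mathbb{Z}^n \subseteq \mathbb{Z}_p^n$ has size at least $p$) and with $L \cap (-q, q)^n = \{0\}$. The second condition says that the $L^{\infty}$-minimum of $L$ is at least $q$, equivalently that $S$ is independent in $E_{p/q}^{\boxtimes n}$: any nonzero $z \in S$ admits a coordinate whose distance modulo $p$ is at least $q$.

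The algebraic starting point is the pair of identities, derived by direct expansion from the definitions of $a$, $p$, $q$:
\[
p - s q \;=\; k \, a^{n-1}, \qquad a \, q - b \, p \;=\; s^{\, n-1} r.
\]
They express that $(p, q)$ is a tight rational approximation to $(a, b)$ with explicit small residues. For $n = 2$ they specialize to $p = q^2 + kr$ and $q = kb + s$, and the cyclic subgroup generated by $(1, q) \in \mathbb{Z}_p^2$ already works: for every $t \in \{1, \ldots, q-1\}$, the integer $tq$ lies in $[q, q^2 - q] \subseteq [q, p - q]$, so $\|tq\|_p \geq q$. For general $n$, the natural candidate is a lattice of the form $L = \{(d_1 y_1, \ldots, d_n y_n) \in \mathbb{Z}^n : \sum_i c_i y_i \equiv 0 \pmod m\}$, with coordinate scalings $d_i$, linear-form weights $c_i$, and modulus $m$ read off the sparse base-$b$ expansion $a = k b^n + s b + r$. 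In the clean case $s=0, r=1, k=1$ (where $a = b^n + 1$, $p = a^{n-1}$, $q = b a^{n-2}$) one takes $d_i = a^{n-2}$, $c_i = b^{i-1}$, $m = a$: the covolume is exactly $p^{n-1}$, and the packing $L \cap (-q,q)^n = \{0\}$ follows from uniqueness of signed base-$b$ representations of integers of absolute value below $a$. For $k > 1$ (resp.\ $s, r > 0$) small cases indicate the scalings $d_i$ should grow across coordinates (resp.\ the linear form should absorb $s, r$-dependent corrections), still yielding covolume $\leq p^{n-1}$ and $L^\infty$-minimum $\geq q$.

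The main obstacle is pinning down the correct lattice uniformly in $(n, k, b, s, r)$ and verifying the packing in all cases. The verification should always follow the same pattern: a hypothetical short $y \in L$ bounds $|\sum c_i y_i|$ strictly below $m$ (the identities above provide exactly this bound, comparing $q/d_i$ against $b$), forcing the congruence to hold over $\mathbb{Z}$, after which uniqueness of balanced base-$b$ representations forces $y = 0$. Once the lattice family is correctly defined, the theorem reduces to a case analysis controlled entirely by the two identities, and the construction should specialize to Shannon's $C_5^{\boxtimes 2}$ example as well as recover the Polak--Schrijver and Baumert et al.\ bounds cited in the introduction.
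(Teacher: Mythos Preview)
Your high-level strategy (build a $p$-ary lattice $L$ with $[\mathbb{Z}^n:L]\le p^{n-1}$ and $\lambda_\infty(L)\ge q$) is exactly the paper's strategy, and your two identities $p-sq=ka^{n-1}$ and $aq-bp=s^{n-1}r$ are correct and are indeed the algebraic backbone of the argument (they appear as Lemma~3.2 in the paper). Your treatment of the cases $n=2$ and $(k,r,s)=(1,1,0)$ is also correct.

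However, what you have submitted is a plan, not a proof. You explicitly say that ``the main obstacle is pinning down the correct lattice uniformly in $(n,k,b,s,r)$'' and then describe how the verification ``should'' go; but you never define the lattice for general parameters, and the heuristic ``small cases indicate the scalings $d_i$ should grow across coordinates'' is not an argument. More seriously, the restricted ansatz
\[
L=\bigl\{(d_1y_1,\ldots,d_ny_n):\textstyle\sum_i c_iy_i\equiv 0\pmod m\bigr\}
\]
(a single linear congruence with diagonal scaling) is too rigid once $s>0$. The paper's lattice is $\mathcal{L}(A)$ for $A=qI_n+X$ where, for $i<j$, $X_{ij}=r\,a^{j-i-1}s^{\,n-(j-i)-1}$ and, for $i>j$, $X_{ij}=-k\,a^{\,n-(i-j)-1}s^{\,i-j-1}$; when $s>0$ every off-diagonal entry is nonzero and the resulting lattice is not contained in any nontrivial $d_1\mathbb{Z}\times\cdots\times d_n\mathbb{Z}$, so it does not fit your template. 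Correspondingly, your intended verification via ``uniqueness of balanced base-$b$ representations'' breaks down: when $s>0$ the short-vector condition no longer forces a single base-$b$ expansion to vanish.

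The paper resolves exactly this difficulty by replacing the base-$b$ argument with a matrix criterion: it shows that $X$ is (after conjugation by a positive diagonal matrix) a nonnegative multiple of the matrix $M_\alpha$ with $\alpha=(rs^n)/(ka^n)\in[0,1]$, proves that such $M_\alpha$ are $P_0$-matrices (all principal minors nonnegative), and then uses the elementary fact that $\lambda_\infty(qI_n+X)\ge q$ whenever $X$ is $P_0$. The companion matrix $B$ is produced in closed form and the identities $AB=pI_n$, $\det B=p$ are checked via a factorization through $R_{r/k}$. This machinery is what supplies the uniform construction and uniform packing bound that your proposal is missing. If you want to salvage your approach, you would need either to exhibit a single-congruence lattice achieving the bound for all $(n,k,b,r,s)$ with $s>0$ (which seems unlikely), or to enlarge your ansatz to full matrices, at which point you will need a substitute for the base-$b$ argument --- and the $P_0$ criterion is precisely such a substitute.
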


We plot the lower bounds on $\Thetag(E_{p/q})$ that we obtain from \autoref{prop:construction-intro} in \autoref{fig:1}.
Our construction encompasses (and extends) several constructions from the literature, in particular from \cite{MR0337668, SvenThesis, dendulk}.
We note that those previous constructions (even combined with ``expansion methods'' of \cite{MR0337668}) are to our knowledge not sufficient to obtain \autoref{th:intro-lattice-limit}.

\begin{figure}[H]
{\includegraphics[scale=0.515]{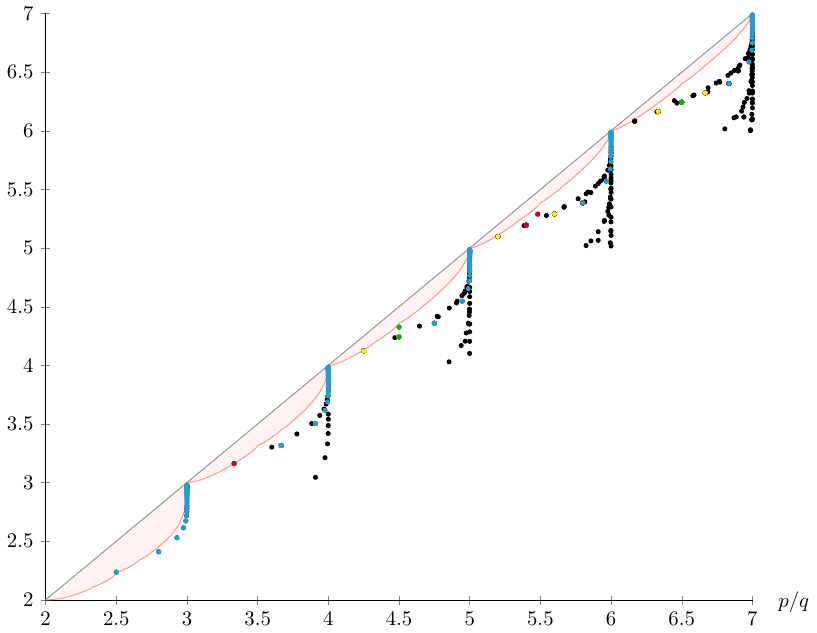}}
\hspace{-5pt}{\includegraphics[scale=0.515]{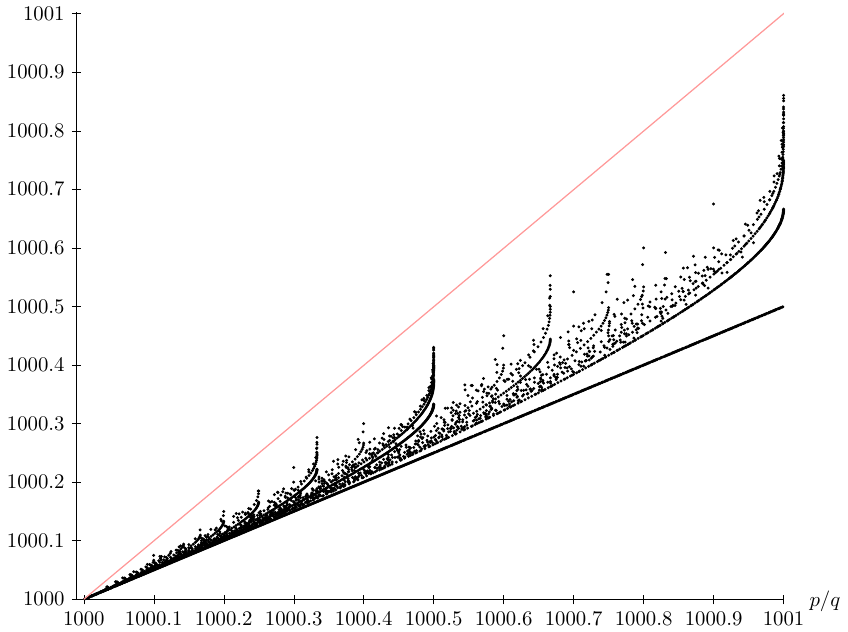}}
\caption{\small Left: Lower bounds on the subgroup Shannon capacity $\Thetag(E_{p/q})$ for $p/q \in [2,7]$ obtained from \autoref{prop:construction-intro}. The red graph is the Lovász theta upper bound. The points indicate our lower bounds. Previously known points of \cite{MR0337668, SvenThesis, dendulk} are indicated with the points coloured purple\,$\color{purple}\bullet$, green\,$\color{green!70!black}\bullet$, blue\,$\color{cyan!80!black}\bullet$, and yellow\,$\color{yellow!80!black}\bullet$,  respectively. Right: Similarly, but for~$p/q\in[1000,1001]$, illustrating behaviour of our lower bounds on $\Thetag(E_{p/q})$ for large $p/q$ (again, a subset of these points were known from aforementioned work, but we do not colour them here). Note that here the Lovász theta function $\vartheta(E_{p/q})$ is very close to (i.e.~essentially indistinguishable from)~$p/q$.}\label{fig:1}
\end{figure}

\paragraph{Comparison to known constructions.} Our family of constructions (\autoref{prop:construction-intro}) encompasses and recovers several known constructions:
\begin{itemize}
    \item[$\color{purple}\bullet$]  \autoref{prop:construction-intro} with $b=2, r=1, s=0$, gives~$\alphag(E_{\smash{p/q}}^{\boxtimes n}) \geq \frac{a^{n}-a^{n-1}}{2^n}$ for~$a=k2^n+1$. These are indicated by purple points in \autoref{fig:1}. Since in this case $p/q =a/2$, this implies the first lower bound in \cite[Theorem 3]{MR0337668}: $\alpha(C_a^{\boxtimes n}) \geq \frac{a^{n}-a^{n-1}}{2^n}$ for~$a=k2^n+1$. %
    \item[$\color{green!70!black}\bullet$]  \autoref{prop:construction-intro} with $b=2, r=1, s=1$, gives~$ \alphag(E_{p/q}^{\boxtimes n}) \geq \frac{a^{n+1}-3a^n+2^n}{2^n(a-2)} =p$ for~$a=k2^n+3$. These are indicated by green points in \autoref{fig:1}. Since in this case $p/q < a/2$, this implies the second lower bound in \cite[Theorem~3]{MR0337668}: $\alpha(C_a^{\boxtimes n}) \geq \frac{a^{n+1}-3a^n+2^n}{2^n(a-2)} =p$ for~$a=k2^n+3$. %
    \item[$\color{cyan!80!black}\bullet$] \autoref{prop:construction-intro} with $b=1, r=1, s=1$, gives a lower bound on $\alphag(E_{\smash{p/q}}^{\boxtimes n})$ that was previously obtained in \cite[Theorem 9.3.3]{SvenThesis}, also via subgroups. These are indicated by blue points in \autoref{fig:1}. These lower bounds were used in~\cite{SvenThesis} to show for any integer $m\geq3$ that, if $s/t \to m$ from below, then~$\Theta(E_{s/t}) \to m$, a phenomenon which can also be recognized in~\autoref{fig:1}.   %
    \item[$\color{yellow!80!black}\bullet$] \autoref{prop:construction-intro} with $s=0$, gives $\alphag(E_{p/q}^{\boxtimes n}) \geq\frac{a^{n}-ra^{n-1}}{b^n}$ for ~$a=kb^n+r$. 
    These are indicated by yellow points in \autoref{fig:1}.
    Since in this case $p/q = a/b$, this implies the lower bound in~\cite[Theorem 3.6]{dendulk}: $\alpha(E_{a/b}^{\boxtimes n})\geq(a^n-ra^{n-1})/b^n$.%
\end{itemize}
We note that the independent sets from~\cite[Theorem 3]{MR0337668} and~\cite[Theorem~3.6]{dendulk} were in general not obtained as subgroups.

\section{Independent sets from lattices}
\label{sec:lattice}

Towards the proof of \autoref{prop:construction-intro}, we discuss in this section the connection between subgroup independent sets in powers of fraction graphs and lattices, and we discuss a method for proving lower bounds on the minimum distance of lattices, which we will use later.

\subsection{Lattices as independent sets in powers of fraction graphs}

Given an invertible matrix $A \in \mathbb{R}^{n \times n}$, we let $\mathcal{L}(A)$ denote the lattice generated by the columns of $A$, that is, $\mathcal{L}(A) = \{A\vect{x} : \vect{x} \in \mathbb{Z}^n\}$.
For $\vect{v} \in \RR^n$ we let $\|\vect{v}\|_\infty = \max_i |\vect{v}_i|$.
Given a lattice $\Lambda$, we define the minimum distance $\lambda_{\infty}(\Lambda) = \min\{\|\vect{v}\|_\infty : \vect{v} \in \Lambda\setminus\{\vect{0}\}\}$.

\begin{lemma}
    \label{lem:matrixtolattice}
    Let $A,B \in \mathbb{Z}^{n \times n}$ and $p,q \in \ZZ_{\geq 1}$. Suppose $AB = p I_n$ and 
    $\lambda_{\infty}(\mathcal{L}(A)) = q$. Then $\alphag(E_{p/q}^{\boxtimes n}) \geq \abs[0]{\det(B)}$.
\end{lemma}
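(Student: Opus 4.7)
The plan is to exhibit an explicit subgroup $S$ of $\ZZ_p^n$ that is independent in $E_{p/q}^{\boxtimes n}$ and has size $\abs[0]{\det(B)}$. The natural candidate (given that the hypothesis on minimum distance is phrased in terms of $\mathcal{L}(A)$) is
\[
S := \pi\bigl(\mathcal{L}(A)\bigr) = A\ZZ^n \bmod p,
\]
where $\pi : \ZZ^n \to \ZZ^n/p\ZZ^n = \ZZ_p^n$ is the quotient map. Since $\mathcal{L}(A)$ is a subgroup of $\ZZ^n$, the image $S$ is automatically a subgroup of $\ZZ_p^n$.

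The first step is to compute $\abs{S}$. From $AB = pI_n$ one reads off $p\ZZ^n = AB\ZZ^n \subseteq A\ZZ^n = \mathcal{L}(A)$, so $\ker(\pi|_{\mathcal{L}(A)}) = \mathcal{L}(A) \cap p\ZZ^n = p\ZZ^n$. Hence $\abs{S}$ equals the index $[\mathcal{L}(A) : p\ZZ^n]$, which by the standard covolume identity is $p^n/\abs[0]{\det(A)}$. Taking determinants in $AB = pI_n$ yields $\abs[0]{\det(A)}\cdot\abs[0]{\det(B)} = p^n$, so $\abs{S} = \abs[0]{\det(B)}$, as needed.

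The second step is to verify that $S$ is independent in $E_{p/q}^{\boxtimes n}$. Because $S$ is a subgroup, it suffices to check that every nonzero $s \in S$ has a coordinate at distance $\geq q$ from $0$ in $\ZZ_p$; equivalently, that every representative of such an $s$ in $\ZZ^n$ of the form $A\vect{z} - \vect{w}$ with $\vect{w} \in p\ZZ^n$ satisfies $\|A\vect{z} - \vect{w}\|_\infty \geq q$. The key observation is that $A\vect{z} - \vect{w}$ lies in $\mathcal{L}(A)$ (using $p\ZZ^n \subseteq \mathcal{L}(A)$ again) and is nonzero (otherwise $A\vect{z} = \vect{w} \in p\ZZ^n$, contradicting $s \ne 0$ in $\ZZ_p^n$). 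The hypothesis $\lambda_\infty(\mathcal{L}(A)) = q$ then immediately gives $\|A\vect{z} - \vect{w}\|_\infty \geq q$.

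There is no substantial obstacle here; the only point that requires care is translating the adjacency condition in $E_{p/q}^{\boxtimes n}$ into an $\ell_\infty$-minimum-distance condition on the sublattice of $\ZZ^n$ obtained by lifting $S$ together with $p\ZZ^n$, and then recognizing that lift as exactly $\mathcal{L}(A)$. Once the inclusion $p\ZZ^n \subseteq \mathcal{L}(A)$ is in hand, both the cardinality computation and the independence check follow directly.
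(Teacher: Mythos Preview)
Your proof is correct and follows essentially the same approach as the paper: define $S=\mathcal{L}(A)\bmod p$, use $p\ZZ^n\subseteq\mathcal{L}(A)$ (from $AB=pI_n$) both to compute $|S|=[\mathcal{L}(A):p\ZZ^n]=p^n/|\det(A)|=|\det(B)|$ and to see that every nonzero residue has a lift in $\mathcal{L}(A)\setminus\{0\}$, whence the minimum-distance hypothesis gives independence. The paper phrases the independence step by picking the representative in $(-p/2,p/2]^n$, whereas you argue via all representatives; these are equivalent formulations of the same idea.
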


\begin{proof}
    Let $\Lambda = \mathcal{L}(A)$ and $S = \{v \pmod{p} : v \in \Lambda\}$. Note that $S$ is a subgroup of $(\mathbb{Z}/p\mathbb{Z})^n$ and that $p\mathbb{Z}^n$ is a sublattice of $\Lambda$. Let $x \in S$ nonzero and consider the representative $v \in \Lambda$ such that $v_i \in (-p/2,p/2]$ for all indices $i$. Since $x \neq 0$, we know that $v \neq 0$ and thus $|v|_\infty \geq q$. This implies that there exists an index $i$ such that the distance from $x_i$ to $0$ is at least $q$ modulo $p$. It follows that $S$ is an independent set of $E_{p/q}^{\boxtimes n}$. Using \cite[Lemma~10]{dadush}, we have $|S| = \abs[0]{\Lambda / p \ZZ^n} = \abs[0]{\det(p \ZZ^n) / \det(\Lambda)} = p^n / \abs[0]{\det(A)} = \abs[0]{\det(B)}$.
\end{proof}

In \autoref{fig:lattice} we give an illustration of \autoref{lem:matrixtolattice}.

\begin{remark}
A lattice $\Lambda$ such that $p\mathbb{Z}^n \subseteq \Lambda \subseteq \ZZ^n$, like $\mathcal{L}(A)$ in \autoref{lem:matrixtolattice}, is called a $p$-ary lattice \cite[Definition 1]{dadush-lecture9}. 
\end{remark}

\begin{remark}
    \autoref{lem:matrixtolattice} explains how any lattice $\Lambda$ such that $p \mathbb{Z}^n \subseteq \Lambda \subseteq \mathbb{Z}^n$ and $\lambda_\infty(\Lambda) \geq q$ corresponds to a subgroup independent set in $E_{\smash{p/q}}^{\boxtimes n}$. 
    In the other direction, if $S$ is an independent set in  $E_{\smash{p/q}}^{\boxtimes n}$ that is also a subgroup we can consider the lattice $\Lambda_S$ generated by $p \mathbb{Z}^n$ together with a set of representatives of the elements of $S$ in $\mathbb{Z}^n$. It is not hard to see that $S \mapsto \Lambda_S$ gives a one to one correspondence between subgroup independent sets in $E_{\smash{p/q}}^{\boxtimes n}$ and lattices $\Lambda$ such that $p \mathbb{Z}^n \subseteq \Lambda \subseteq \mathbb{Z}^n$ and $\lambda_\infty(\Lambda) \geq q$.
\end{remark}

\subsection{Lower bound on the minimum distance via principal minors}

In general it is a hard problem to determine the minimum distance $\lambda_\infty(\Lambda)$ \cite{vanEmdeBoas1981}. To get a handle on it we will use the concept of a $P_0$-matrix \cite{MR4411340}. For any matrix $M \in \RR^{n \times n}$ and $S \subseteq [n]$ we denote by $M_S$ the principal submatrix $(M_{i,j})_{i,j \in S}$.
We call~$M$ a \emph{$P_0$-matrix} if for every $S\subseteq[n]$ we have $\det(M_S)\geq0$, where we define $\det(M_\emptyset) = 1$. 
\begin{lemma}\label{lem:inf-dist-lb}
    Suppose $M \in \mathbb{R}^{n \times n}$ is a $P_0$-matrix.  
    Then for every $q > 0$  the matrix $A = M + q \cdot I_n$ is invertible and satisfies $\lambda_\infty(\mathcal{L}(A)) \geq q$.
\end{lemma}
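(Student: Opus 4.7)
The plan is to prove invertibility and the minimum-distance bound separately; both fall out of standard characterizations of $P_0$-matrices.

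For invertibility, I would expand $\det(M + q I_n)$ as a polynomial in $q$ using multilinearity of the determinant in columns:
\[
\det(M + q I_n) \;=\; \sum_{T \subseteq [n]} q^{\,n - |T|} \det(M_T),
\]
with the convention $\det(M_\emptyset) = 1$. Every coefficient $\det(M_T)$ is nonnegative since $M$ is $P_0$, and the leading coefficient is $1$. Thus $\det(A) \geq q^n > 0$ and $A$ is invertible. (Applying the same expansion to every principal submatrix shows in fact that $A$ itself is a $P$-matrix, though only invertibility is needed below.)

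For the distance bound, the key input is the standard characterization of $P_0$-matrices (see, e.g., \cite{MR4411340}): $M$ is $P_0$ if and only if for every nonzero $\vect{x} \in \RR^n$ there is some index $i$ with $x_i \neq 0$ and $x_i(M\vect{x})_i \geq 0$. Given $\vect{x} \in \ZZ^n \setminus \{\vect{0}\}$, pick such an index $i$ and set $\vect{y} = A\vect{x}$. Then
\[
x_i y_i \;=\; x_i(M\vect{x})_i + q\,x_i^2 \;\geq\; q\,x_i^2 \;\geq\; q.
\]
In particular $x_i y_i \geq 0$, so $|x_i|\,|y_i| = x_i y_i \geq q\,x_i^2$, and dividing by $|x_i| \geq 1$ yields $|y_i| \geq q|x_i| \geq q$. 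Hence $\|\vect{y}\|_\infty \geq |y_i| \geq q$, giving the desired bound $\lambda_\infty(\mathcal{L}(A)) \geq q$.

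The only real obstacle is citing the $P_0$-characterization correctly; everything else is one line of algebra. Integrality of $\vect{x}$ is used only through $|x_i| \geq 1$, which is exactly what upgrades the algebraic inequality $x_i(M\vect{x})_i \geq 0$ into a concrete $\ell_\infty$-distance of $q$; this is precisely why $P_0$-matrices (rather than strictly positive-definite ones) are the natural gadget for producing integer lattice vectors of controlled size.
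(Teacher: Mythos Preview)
Your proof is correct and follows essentially the same route as the paper: invertibility via the principal-minor expansion of $\det(M+qI_n)$ (the paper's \autoref{lem:det-comp}), and the distance bound via the sign-reversal characterization of $P_0$-matrices (which the paper proves as \autoref{lem:P0nonneg} rather than citing). The only difference is that the paper writes the key inequality directly as $|(M\vect{x})_i + q\vect{x}_i| \geq q|\vect{x}_i| \geq q$, whereas you take a slightly longer detour through $x_i y_i$, but the content is identical.
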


We work towards the proof of \autoref{lem:inf-dist-lb}, which we note is closely related to known results about $P_0$-matrices and the stronger notion of $P$-matrices \cite{MR4411340}.

\begin{lemma}\label{lem:det-comp}
    For $M \in \RR^{n \times n}$, $\det(M + \mathrm{diag}(d_1, \ldots, d_n)) = \sum_{S \subseteq [n]} \det(M_S) \prod_{i \not\in S} d_i$.
\end{lemma}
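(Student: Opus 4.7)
The plan is to prove this classical identity using multilinearity of the determinant in the columns. Let $D = \mathrm{diag}(d_1, \ldots, d_n)$ and let $e_1, \ldots, e_n$ denote the standard basis of $\RR^n$. The $j$-th column of $M + D$ is $M_{\cdot j} + d_j e_j$, so by multilinearity of $\det$ in each column,
\[
\det(M+D) = \sum_{T \subseteq [n]} \det(C_1^T, \ldots, C_n^T),
\]
where $C_j^T = d_j e_j$ if $j \in T$ and $C_j^T = M_{\cdot j}$ if $j \notin T$. Pulling out the scalars $d_j$ from the columns $j \in T$ gives $\det(C_1^T, \ldots, C_n^T) = \bigl(\prod_{j \in T} d_j\bigr)\,\det(\widetilde M_T)$, where $\widetilde M_T$ is the matrix with $e_j$ in column $j$ for $j \in T$ and $M_{\cdot j}$ in column $j$ for $j \notin T$.

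The next step is to show $\det(\widetilde M_T) = \det(M_S)$, where $S = [n]\setminus T$. I would do this by applying the same permutation to rows and columns so that the indices in $T$ come before those in $S$; since the row and column permutations are identical the overall sign is $+1$. In block form the permuted matrix becomes
\[
\begin{pmatrix} I_{|T|} & * \\ 0 & M_S \end{pmatrix},
\]
because for $j \in T$ and $i \in S$ the entry $(\widetilde M_T)_{i,j}$ is the $i$-th entry of $e_j$, which is $0$ since $i \neq j$. The determinant of this block-triangular matrix is $\det(I_{|T|}) \det(M_S) = \det(M_S)$.

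Combining these observations,
\[
\det(M+D) = \sum_{T \subseteq [n]} \Bigl(\prod_{j \in T} d_j\Bigr) \det(M_{[n] \setminus T}) = \sum_{S \subseteq [n]} \det(M_S) \prod_{i \notin S} d_i,
\]
where in the last step I reindex by $S = [n] \setminus T$. The only subtle point is verifying that the block-triangular reduction really does produce $\det(M_S)$ with the correct sign, but because the row and column permutations are taken to be equal this sign is trivially $+1$, so there is no real obstacle: the proof is essentially a careful bookkeeping of the multilinear expansion.
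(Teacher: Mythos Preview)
Your proof is correct and follows essentially the same approach as the paper: the paper's one-line proof invokes multilinearity of the determinant and cofactor expansion over the appropriate columns, and your argument is simply a careful unpacking of exactly that, with the block-triangular reduction playing the role of the cofactor expansion step.
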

\begin{proof}
    This follows from multilinearity of the determinant and cofactor expansion over appropriate columns.
\end{proof}

\begin{lemma}\label{lem:P0nonneg}
    Let $M \in \RR^{n \times n}$ be a $P_0$-matrix. Then for every $\vect{x} \in \mathbb{R}^n \setminus \{ \vect{0} \}$ there exists an $i \in [n]$ such that  ${\vect{x}}_i \neq 0$ and  $(M \vect{x})_i \, \vect{x}_i \geq 0$.
\end{lemma}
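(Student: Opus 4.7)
The plan is to argue by contradiction. Suppose $\vect{x} \in \mathbb{R}^n \setminus \{\vect{0}\}$ satisfies $\vect{x}_i(M\vect{x})_i < 0$ for every index $i$ in the support $S = \{i \in [n] : \vect{x}_i \neq 0\}$. I would first restrict to the support: let $\vect{y} \in \mathbb{R}^S$ be $\vect{x}$ restricted to $S$, so all entries of $\vect{y}$ are nonzero. Since $\vect{x}_j = 0$ for $j \notin S$, for any $i \in S$ we have $(M\vect{x})_i = \sum_{j \in S} M_{ij} \vect{y}_j = (M_S \vect{y})_i$. Therefore the assumption becomes $\vect{y}_i (M_S \vect{y})_i < 0$ for all $i \in S$.

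The key trick is to turn $\vect{y}$ into the kernel vector of a diagonal perturbation of $M_S$. Define $\vect{d} \in \mathbb{R}^S$ by $\vect{d}_i = -(M_S \vect{y})_i / \vect{y}_i$. By hypothesis $\vect{d}_i > 0$ for every $i \in S$, and by construction
\[
  (M_S + \mathrm{diag}(\vect{d}))\, \vect{y} \;=\; M_S \vect{y} + \mathrm{diag}(\vect{d})\vect{y} \;=\; \vect{0}.
\]
Since $\vect{y} \neq \vect{0}$, this forces $\det(M_S + \mathrm{diag}(\vect{d})) = 0$.

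Now I would expand this determinant using \autoref{lem:det-comp}, applied to $M_S$ and the diagonal entries $\vect{d}_i$:
\[
  \det(M_S + \mathrm{diag}(\vect{d})) \;=\; \sum_{T \subseteq S} \det((M_S)_T) \prod_{i \in S \setminus T} \vect{d}_i.
\]
Since $M$ is a $P_0$-matrix and every principal submatrix of $M_S$ is also a principal submatrix of $M$, each determinant $\det((M_S)_T)$ is nonnegative, and each product $\prod_{i \in S \setminus T} \vect{d}_i$ is strictly positive. Hence every term is $\geq 0$, and the $T = \emptyset$ term equals $\det(M_\emptyset)\prod_{i \in S}\vect{d}_i = \prod_{i \in S}\vect{d}_i > 0$. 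Thus the sum is strictly positive, contradicting that it equals $0$.

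There is no real obstacle here once one spots the perturbation trick; the crux is recognizing that \autoref{lem:det-comp} combined with the $P_0$ hypothesis makes every term in the expansion of $\det(M_S + \mathrm{diag}(\vect{d}))$ nonnegative, while the $T = \emptyset$ contribution is strictly positive because $\vect{d}$ is entrywise positive. The reduction to the support $S$ is needed so that $\vect{y}$ has no zero coordinates and $\vect{d}$ is well-defined and positive.
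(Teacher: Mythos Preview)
Your proof is correct and follows essentially the same approach as the paper: argue by contradiction, restrict to the support so all coordinates are nonzero, define the diagonal perturbation $d_i = -(M_S\vect{y})_i/\vect{y}_i > 0$, and use \autoref{lem:det-comp} together with the $P_0$ hypothesis to show $\det(M_S + \mathrm{diag}(\vect{d})) > 0$, contradicting $(M_S + \mathrm{diag}(\vect{d}))\vect{y} = 0$. The only difference is cosmetic: you spell out the determinant expansion over all $T \subseteq S$ and isolate the $T = \emptyset$ term explicitly, whereas the paper states the lower bound $\det(A) \geq \prod_j d_j$ more tersely.
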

\begin{proof}
    Suppose that there is an $x \in \RR^n \setminus \{0\}$ such that for every $j \in [n]$ we have $x_j = 0$ or  $x_j (Mx)_j < 0$. By restricting the coordinates of $x$ and taking the appropriate submatrix of $M$ we may assume that $x_j\neq 0$ for all $j$. Let $d_j = - (Mx)_j / x_j$. Note that $d_j > 0$. Let $A = M + \mathrm{diag}(d_1, \ldots, d_n)$. Then by \autoref{lem:det-comp}, $\det(A) \geq \prod_{j \in [n]} d_j > 0$. However, by construction of the $d_j$, we have $Ax = 0$, which is a contradiction.
\end{proof}

\begin{proof}[Proof of \autoref{lem:inf-dist-lb}]
Let $\vect{x} \in \ZZ\setminus \{\vect{0}\}$, and let $i \in [n]$ such that $\vect{x}_i\neq0$ and $(M\vect{x})_i \vect{x}_i \geq 0$ (\autoref{lem:P0nonneg}). Then $|( (M+qI_n)\vect{x})_i|= |(M\vect{x})_i + q\vect{x}_i| \geq q |\vect{x}_i| \geq q$. Invertibility of $M + qI_n$ follows from \autoref{lem:det-comp}.
\end{proof}

\begin{figure}[H]
\centering
\includegraphics[scale=0.9]{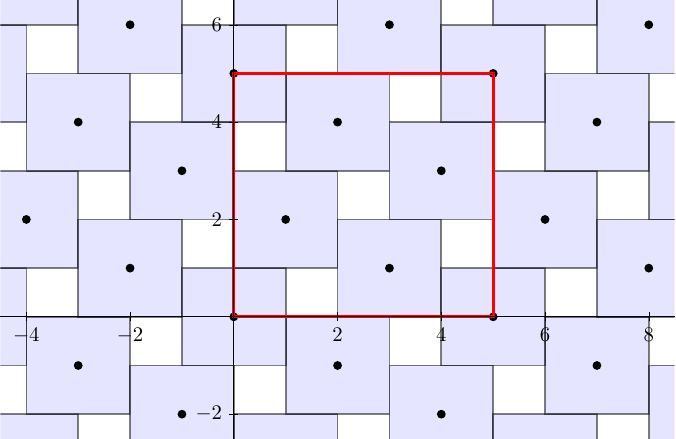}
\caption{The lattice $\Lambda = \mathcal{L}(A) \subseteq \RR^2$ generated by the columns of $A = \left(\begin{smallmatrix} 2 & 1 \\ -1 & 2 \end{smallmatrix}\right)$. Letting $B = \left(\begin{smallmatrix} 2 & -1 \\ 1 & 2 \end{smallmatrix}\right)$, we have that $AB = 5 I_2$, $\abs[0]{\det(B)} = 5$ and $\lambda_\infty(\Lambda) = 2$. It thus follows from \autoref{lem:matrixtolattice} that $\alphag(E_{5/2}^{\boxtimes 2}) \geq 5$. By taking the lattice points inside the red square, one obtains a representation of the independent set $\{\vect{v}\pmod{5} : \vect{v} \in \Lambda\}$. The minimum-distance lower bound $\lambda_\infty(\Lambda) \geq 2$ corresponds in the figure to the fact that the squares of side-length $2$ centered at the lattice points do not overlap, and this lower bound can be proven using \autoref{lem:inf-dist-lb}.}
\label{fig:lattice}
\end{figure}

\section{A family of lattices}
\label{sec:constr_proof}
In \autoref{sec:lattice} we discussed the link between subgroup independent sets in powers of fraction graphs and lattices, and we discussed how to use $P_0$-matrices to lower bound the minimum distance in certain lattices. In this section we will use these ideas to prove \autoref{prop:construction-intro}. First we give an explicit construction of $P_0$-matrices and prove basic properties. Then we use these to construct the lattices that we need.

\subsection{Construction of \texorpdfstring{$P_0$}{P0}-matrices}
The following matrices will play a central role in our construction of lattices, and indeed the main goal here will be to prove that they are $P_0$-matrices.

\begin{definition}
    \label{def:Malpha}
    For $\alpha \in \mathbb{R}$ we define the matrix $M_\alpha \in \mathbb{R}^{n \times n}$ by
\[
(M_{\alpha})_{ij} = \begin{cases} 
      \alpha & \text{if } i < j, \\
      -1 & \text{if } i > j,\\
      0 & \text{otherwise},
   \end{cases}
   \qquad\textnormal{e.g., for $n=4$,}\quad  M_\alpha = \begin{pmatrix}
    0  & \alpha & \alpha & \alpha \\
    -1 & 0 & \alpha & \alpha \\
    -1 & -1 & 0 & \alpha\\
    -1 & -1 & -1 & 0
\end{pmatrix}.
\]
\end{definition}

\begin{lemma}\label{lem:MaP0}\label{lem:M_alpha}%
For $\alpha \in [0,1]$ the matrix $M_\alpha$ is a $P_0$-matrix.
\end{lemma}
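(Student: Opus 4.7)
The plan is to reduce the $P_0$-matrix claim to showing that $\det(M_\alpha) \geq 0$ for every $n$, and then to prove this non-negativity by induction via a short linear recursion in $n$.

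First, I would observe that every principal submatrix of $M_\alpha$ has the same combinatorial form as $M_\alpha$ itself, just of smaller size. Precisely, if $S = \{i_1 < \cdots < i_k\} \subseteq [n]$, then the $(a,b)$-entry of $(M_\alpha)_S$ equals $\alpha$ when $a<b$, $-1$ when $a>b$, and $0$ when $a=b$, since the defining condition depends only on the relative order of the indices $i_a, i_b$. Hence $(M_\alpha)_S$ is again a matrix of the form in \autoref{def:Malpha} of size $k$, and it suffices to prove that $d_n := \det(M_\alpha) \geq 0$ for every $n \geq 0$ and every $\alpha \in [0,1]$.

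Second, I would establish the linear recursion
\[
d_n = (1-\alpha)\, d_{n-1} + \alpha\, d_{n-2} \qquad (n \geq 2),
\]
together with the initial values $d_0 = 1$ and $d_1 = 0$. To derive it, perform the row operation $R_{n-1} \leftarrow R_{n-1} - R_n$, which leaves the determinant unchanged and makes the new row $n-1$ equal to $(0,\ldots,0,1,\alpha)$. Cofactor expansion along this row produces exactly two contributions. The contribution from the entry $1$ equals the minor obtained by deleting row and column $n-1$, which by the first observation is a matrix of the form in \autoref{def:Malpha} of size $n-1$, and therefore has determinant $d_{n-1}$. The contribution from the entry $\alpha$ equals $-\alpha\det(B)$, where $B$ is the $(n-1)\times(n-1)$ matrix obtained from $M_\alpha$ of size $n-1$ by replacing its last row with the all-$(-1)$ row. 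Writing $B = M_\alpha - e_{n-1} e_{n-1}^T$ (at size $n-1$), the matrix determinant lemma gives $\det(B) = d_{n-1} - d_{n-2}$. Combining the two contributions yields $d_n = d_{n-1} - \alpha(d_{n-1}-d_{n-2}) = (1-\alpha)d_{n-1} + \alpha d_{n-2}$, as claimed.

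Finally, since both coefficients $1-\alpha$ and $\alpha$ are nonnegative for $\alpha \in [0,1]$, and since $d_0 = 1$, $d_1 = 0$, an immediate induction in $n$ shows $d_n \geq 0$, concluding the proof. The only genuine computational step is the derivation of the recursion; one has to be careful to identify the auxiliary matrix $B$ correctly and apply the matrix determinant lemma in the right direction. As a sanity check, the recursion admits the closed form $d_n = (\alpha + (-\alpha)^n)/(1+\alpha)$, which is manifestly nonnegative on $[0,1]$.
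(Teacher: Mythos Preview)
Your proof is correct, and both you and the paper begin with the same key reduction: since every principal submatrix of $M_\alpha$ is again an $M_\alpha$ of smaller size, it suffices to show $d_n := \det(M_\alpha) \geq 0$ for all $n$. From there, however, the arguments diverge. The paper introduces the auxiliary matrix $R_\alpha$ (the ``weighted cyclic shift''), proves the factorization $M_\alpha = (R_\alpha + \alpha I_n)(R_\alpha - I_n)^{-1}$, and combines this with the explicit formula $\det(xR_\alpha + yI_n) = (-x)^n\alpha + y^n$ to obtain the closed form $d_n = (\alpha + (-\alpha)^n)/(1+\alpha)$ directly. Your route is more elementary: the row operation plus cofactor expansion plus the matrix determinant lemma yields the two-term recurrence $d_n = (1-\alpha)d_{n-1} + \alpha d_{n-2}$, whose nonnegative coefficients immediately give $d_n \geq 0$ by induction. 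Your argument is self-contained and avoids the $R_\alpha$ machinery entirely; the paper's approach, on the other hand, is not just a determinant computation in isolation---the same $R_\alpha$ factorization and determinant formula are reused later (\autoref{lem:Malphafrac}, \autoref{th:main-constr}) to prove $AB = pI_n$ and $\det B = p$ for the lattice construction, so the investment pays off downstream. As you note, solving your recurrence recovers exactly the paper's closed form, so the two computations are consistent.
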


Towards the proof of \autoref{lem:MaP0} we first introduce some auxiliary matrices and prove some basic properties.

\begin{definition}
    For $\alpha \in \mathbb{R}$ we define the matrices $R_\alpha, D_\alpha \in \mathbb{R}^{n \times n}$ by
\[
(R_\alpha)_{ij} = \begin{cases} 
      -\alpha & \textnormal{if $i=1, j=n$}, \\
      1 & \textnormal{if $i = j+1$},\\
      0 & \text{otherwise},
   \end{cases}
   \qquad\textnormal{e.g., for $n=4$,}\quad  R_\alpha = \begin{pmatrix}
    0  & 0 & 0 & \hspace{-0.5em}-\alpha \\
    1 & 0 & 0 & 0 \\
    0 & 1 & 0 & 0\\
    0 & 0 & 1 & 0
\end{pmatrix},
\]
and $D_\alpha = \text{diag}(1,\alpha, \dots, \alpha^{n-1})$.
\end{definition}

\begin{lemma}\label{lem:det}
    For $x,y,\alpha \in \mathbb{R}$ we have 
    $\det(x R_\alpha + y I_n) = (-x)^n \alpha + y^n$.
\end{lemma}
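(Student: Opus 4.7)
The plan is to compute the determinant directly via Laplace cofactor expansion along the first row of $xR_\alpha + yI_n$. The matrix has only four kinds of nonzero entries: $y$ on the diagonal, $x$ on the subdiagonal (positions $(i,i-1)$), a single $-x\alpha$ in the upper-right corner at position $(1,n)$, and $0$ elsewhere. In particular the first row contains exactly two nonzero entries, at positions $(1,1)$ and $(1,n)$, so the cofactor expansion collapses to just two terms.

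For the first term, the $(1,1)$-minor is obtained by deleting row $1$ and column $1$, leaving an $(n-1)\times(n-1)$ lower-triangular matrix whose diagonal entries are all $y$; its determinant is therefore $y^{n-1}$. For the second term, the $(1,n)$-minor is obtained by deleting row $1$ and column $n$, leaving an $(n-1)\times(n-1)$ upper-triangular matrix whose diagonal entries are all $x$ (the subdiagonal $x$s of the original are shifted onto the diagonal of the submatrix); its determinant is $x^{n-1}$. Verifying both shape claims on the $n=4$ example displayed in the definition of $R_\alpha$ provides a useful sanity check before writing the general argument.

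Combining these minors with the appropriate cofactor signs yields
\[
\det(xR_\alpha + yI_n) \;=\; y\cdot y^{n-1} \;+\; (-1)^{1+n}(-x\alpha)\cdot x^{n-1} \;=\; y^n + (-1)^n \alpha\, x^n \;=\; y^n + (-x)^n\alpha,
\]
which is the claimed identity. There is essentially no obstacle here, since $R_\alpha$ is a companion-style matrix for the polynomial $\lambda^n+\alpha$ and the expansion is a single step. The one delicate point is the sign bookkeeping: the cofactor sign $(-1)^{n+1}$ at position $(1,n)$ combines with the explicit $-1$ inside the entry $-x\alpha$ to give an overall factor $(-1)^n$, and it is precisely this that converts $\alpha\, x^n$ into $(-x)^n\alpha$.
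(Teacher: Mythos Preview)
Your proof is correct and takes essentially the same direct approach as the paper: both exploit the sparsity of $xR_\alpha + yI_n$ to reduce the determinant to exactly two contributing terms. The only cosmetic difference is that the paper uses the Leibniz permutation expansion (only the identity and the full $n$-cycle survive), whereas you use Laplace cofactor expansion along the first row; the sign bookkeeping and the final combination $y^n + (-1)^n\alpha x^n$ come out identically.
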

\begin{proof}
    Let $A = x R_\alpha + y I_n$. We have $\det(A) = \sum_{\sigma \in S_n} \mathrm{sign}(\sigma) \prod_{i=1}^n A_{i, \sigma(i)}$. There are two summands that are possibly nonzero, namely for $\sigma \in S_n$ equal to the trivial permutation $i\mapsto i$ and for $\sigma$ equal to the cyclic shift $\pi : i \mapsto i-1 \bmod n$. Therefore,
    \[
    \det(A) = \prod_{i=1}^n A_{i,i} + \mathrm{sign}(\pi) \prod_{i=1}^n A_{i, \pi(i)} = y^n + (-1)^{n-1} \cdot x^{n-1} \cdot (-x \alpha).\qedhere
    \]
\end{proof}

\begin{lemma}\label{lem:Ma}
        For $\alpha\neq-1$,
        $M_\alpha = (R_\alpha + \alpha I_n) (R_\alpha - I_n)^{-1}$. %
\end{lemma}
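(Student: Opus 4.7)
The plan is to verify the identity directly by multiplication, after first checking invertibility of $R_\alpha - I_n$. The preceding \autoref{lem:det} (applied with $x=1$, $y=-1$) gives $\det(R_\alpha - I_n) = (-1)^n\alpha + (-1)^n = (-1)^n(\alpha+1)$, which is nonzero exactly when $\alpha \neq -1$, so $R_\alpha - I_n$ is invertible in that regime. It then suffices to show
\[
M_\alpha (R_\alpha - I_n) = R_\alpha + \alpha I_n.
\]

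To establish this identity I would work column by column, exploiting the sparsity of $R_\alpha - I_n$. For $1 \leq j < n$, column $j$ of $R_\alpha - I_n$ has entry $-1$ in row $j$ and $+1$ in row $j+1$ (and zero elsewhere), so the corresponding column of the product is simply $-M_\alpha(:,j) + M_\alpha(:,j+1)$. By the definition of $M_\alpha$, the entries of $M_\alpha(:,j+1) - M_\alpha(:,j)$ cancel to zero outside rows $j$ and $j+1$, leaving an $\alpha$ in row $j$ (from $\alpha - 0$) and a $1$ in row $j+1$ (from $0 - (-1)$). This matches column $j$ of $R_\alpha + \alpha I_n$ exactly.

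For the last column, column $n$ of $R_\alpha - I_n$ has entry $-\alpha$ in row $1$ and $-1$ in row $n$, so the $n$-th column of the product equals $-\alpha\,M_\alpha(:,1) - M_\alpha(:,n)$. Column $1$ of $M_\alpha$ is $(0,-1,\ldots,-1)^T$ and column $n$ is $(\alpha,\ldots,\alpha,0)^T$, so this combination evaluates to $(-\alpha, 0, \ldots, 0, \alpha)^T$, which agrees with column $n$ of $R_\alpha + \alpha I_n$.

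There is no real obstacle here; the proof is a bookkeeping exercise. The only thing to be careful about is tracking the indices in the definitions of $R_\alpha$ and $M_\alpha$ (in particular the cyclic wrap-around caused by the $-\alpha$ in the top-right corner of $R_\alpha$), and invoking \autoref{lem:det} to justify that the inverse $(R_\alpha - I_n)^{-1}$ actually exists before writing the identity in its quotient form.
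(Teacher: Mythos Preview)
Your proof is correct and follows essentially the same approach as the paper: both establish invertibility of $R_\alpha - I_n$ via \autoref{lem:det} and then verify $M_\alpha(R_\alpha - I_n) = R_\alpha + \alpha I_n$ column by column. The paper packages the column computations using auxiliary vectors $v_k^{+} = e_1 + \cdots + e_{k-1}$ and $v_k^{-} = e_{k+1} + \cdots + e_n$ with $M_\alpha e_k = \alpha v_k^{+} - v_k^{-}$, while you read off the entries directly, but the two arguments are interchangeable bookkeeping for the same calculation.
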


\begin{proof}
    From $\alpha\neq-1$ follows that $R_\alpha - I_n$ is invertible by \autoref{lem:det}.
    We will show that $M_\alpha (R_\alpha - I_n) = R_\alpha + \alpha I_n$. For $k \in [n]$ define the vectors $v_k^{+},v_{k}^- \in \mathbb{R}^{n}$ by $v_k^{+} = e_1 + e_2 + \cdots + e_{k-1}$ and $v_k^{-} = e_{k+1} + e_{k+2} + \cdots + e_{n}$, where $v_1^{+} = v_n^{-} = 0$. With this notation we have $M_\alpha e_k = \alpha v_{k}^+ - v_{k}^-$. Observe that for $k \in [n-1]$ we have 
    \[
        M_\alpha(R_\alpha - I_n)e_k = M_\alpha(e_{k+1} - e_{k}) = \alpha(v_{k+1}^{+} - v_{k}^+) - (v_{k+1}^- - v_{k}^-) = \alpha e_{k} + e_{k+1},
    \]
    which is indeed equal to the $k$th column of $R_{\alpha} + \alpha I_n$. We also have 
    \[
        M_\alpha(R_\alpha - I_n)e_n = M_\alpha(-\alpha e_{1} - e_{n}) = \alpha v_1^{-} - \alpha v_{n}^+ = \alpha (e_{n} - e_1),
    \]
    which is indeed the final column of $R_\alpha + \alpha I_n$.
\end{proof}

\begin{proof}[Proof of \autoref{lem:M_alpha}]
    Using \autoref{lem:Ma},  \autoref{lem:det} and $\alpha \in [0,1]$, it follows that  
    \[
    \det(M_{\alpha}) = \frac{\det(R_{\alpha} + \alpha I_n)}{ \det(R_\alpha - I_n)} = 
    \frac{(-1)^n\alpha+\alpha^n}{(-1)^n\alpha+(-1)^n} = \frac{\alpha+(-\alpha)^n}{\alpha+1} \geq 0.
    \] 
    Since every principal submatrix of $M_\alpha$ is again of the form $M_\alpha$ (but of a smaller size), it follows that~$M_{\alpha}$ is a $P_0$-matrix.
\end{proof}

\begin{lemma}\label{lem:Malphafrac}%
        For $\alpha\neq-1, \beta\neq0$, 
        \[
        x I_n + y D_\beta M_\alpha D_\beta^{-1} = (\beta (x+y) R_{\alpha \beta^{-n}} + (-x + y \alpha)I_n) (\beta R_{\alpha \beta^{-n}} - I_n)^{-1}.
        \]
\end{lemma}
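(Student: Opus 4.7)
The plan is to reduce everything to the simpler identity of \autoref{lem:Ma} by conjugating by $D_\beta$. The key observation, which I would verify first by a direct entry-wise comparison, is that
\[
D_\beta R_\alpha D_\beta^{-1} \;=\; \beta \, R_{\alpha \beta^{-n}}.
\]
This is immediate: the $(i,j)$-entry of $D_\beta R_\alpha D_\beta^{-1}$ equals $\beta^{i-j}(R_\alpha)_{ij}$, which vanishes outside the positions $(i,i{-}1)$ (where it equals $\beta$) and $(1,n)$ (where it equals $-\alpha\beta^{1-n}$); these are precisely $\beta$ times the corresponding entries of $R_{\alpha\beta^{-n}}$.

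Next I would apply \autoref{lem:Ma}, which gives $M_\alpha = (R_\alpha + \alpha I_n)(R_\alpha - I_n)^{-1}$ (valid since $\alpha \neq -1$). Conjugating by $D_\beta$ distributes over the product and the inverse, so combined with the previous step,
\[
D_\beta M_\alpha D_\beta^{-1} \;=\; \bigl(\beta R_{\alpha\beta^{-n}} + \alpha I_n\bigr)\bigl(\beta R_{\alpha\beta^{-n}} - I_n\bigr)^{-1}.
\]
Write $S = \beta R_{\alpha\beta^{-n}}$ for brevity. Then
\[
x I_n + y D_\beta M_\alpha D_\beta^{-1} \;=\; \bigl[ x(S - I_n) + y(S + \alpha I_n)\bigr]\,(S - I_n)^{-1} \;=\; \bigl[(x+y)S + (-x + y\alpha)I_n\bigr]\,(S - I_n)^{-1},
\]
which is the claimed identity after substituting back $S = \beta R_{\alpha\beta^{-n}}$.

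The only non-cosmetic issue is ensuring that $\beta R_{\alpha\beta^{-n}} - I_n$ is invertible so the right-hand side makes sense. By \autoref{lem:det} applied with $x=\beta$, $y=-1$, and parameter $\alpha\beta^{-n}$, its determinant is $(-\beta)^n \cdot \alpha\beta^{-n} + (-1)^n = (-1)^n(\alpha + 1)$, which is nonzero precisely because $\alpha \neq -1$. The same hypothesis ensures \autoref{lem:Ma} is applicable, so no further case analysis is needed. I anticipate the only subtlety in writing this up is being careful about the direction of conjugation (since $R_\alpha$ and $D_\beta$ do not commute), but the entry-wise check fixes the sign/exponent conventions unambiguously; after that, the rest is linear algebra.
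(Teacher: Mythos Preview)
Your proof is correct and follows exactly the same route as the paper: establish $D_\beta R_\alpha D_\beta^{-1} = \beta R_{\alpha\beta^{-n}}$ by direct computation, conjugate the identity of \autoref{lem:Ma}, and then collect terms after writing $I_n = (S-I_n)(S-I_n)^{-1}$. You supply a bit more detail than the paper (the explicit entry check and the invertibility verification via \autoref{lem:det}), but the argument is the same.
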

\begin{proof}
    From a direct computation, it follows that $D_\beta R_\alpha D_\beta^{-1} = \beta R_{\alpha \beta^{-n}}$. From this and  \autoref{lem:Ma} we obtain $D_\beta M_\alpha D_\beta^{-1} = (\beta R_{\alpha \beta^{-n}} + \alpha I_n) (\beta R_{\alpha \beta^{-n}} - I_n)^{-1}$. Then the claim follows by writing $I_n = (\beta R_{\alpha \beta^{-n}} - I_n)(\beta R_{\alpha \beta^{-n}} - I_n)^{-1}$ and collecting terms.
\end{proof}

\subsection{Construction of lattices}

In this section we define a family of lattices parameterized by $n,k,b,r,s$. In what follows $n$ will always be a positive integer but, unless stated otherwise, $k,b,r,s$ are allowed to be arbitrary real numbers. We will define numbers $a,p,q$ and matrices $X,Y,A,B$ whose values and entries respectively are given by polynomial expressions with integer coefficients in the parameters $k,b,r,s$ and are thus integers whenever all these parameters are integers. This will not always be clear from the definition, since it will often be more convenient to work with a simplified rational form. When working with such rational expressions, since we know that they represent polynomials, we will omit stipulating that denominators are nonzero when proving identities between them.

\begin{definition}\label{def:pqdef}
    We define
    \[
        a = k b^n + s b + r,
        \quad
        p = \displaystyle\frac{s^n r + k a^n}{r + k b^n},
        \quad\text{ and }\quad
        q = \displaystyle\frac{s^{n-1} r + kba^{n-1}}{r + kb^n}.
    \]
\end{definition}

\begin{lemma}
    \label{lem:pqdef}
    Both $p$ and $q$ are polynomials in the parameters $k,b,r,s$, with integer coefficients, and they satisfy: 
    \begin{enumerate}[label=\upshape(\roman*)]
    \item 
    $p = qs + ka^{n-1}$.\label{eq:num1}
    \item 
    $pb = qa -rs^{n-1}$.\label{eq:num2}
    \item 
    If $b,k > 0$ and $r,s \geq 0$, then $p/q \leq a/b$. \label{eq:num3}
    \end{enumerate}
\end{lemma}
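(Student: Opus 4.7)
The whole argument will hinge on the rearrangement $a - sb = r + kb^n$ of the defining equation for $a$: this identifies the common denominator of $p$ and $q$ with $a - sb$, so every rational expression below can be simplified using the single substitution $a = r + kb^n + sb$. My plan is first to establish the polynomial nature of $p$ and $q$, then to verify the identities (i) and (ii) by direct computation over the common denominator, and finally to derive (iii) as an immediate consequence of (ii).

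For the polynomial nature, I will write $a = (r + kb^n) + sb$ and expand by the binomial theorem:
\[
a^n = (sb)^n + (r + kb^n)\sum_{i=1}^{n} \binom{n}{i}(r+kb^n)^{i-1}(sb)^{n-i}.
\]
Since $s^n r + k(sb)^n = s^n(r + kb^n)$, the numerator of $p$ becomes $(r+kb^n)$ times an element of $\ZZ[k,b,r,s]$, so $p \in \ZZ[k,b,r,s]$; the same expansion applied to $a^{n-1}$ handles $q$.

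For (i) and (ii) I would clear denominators and simplify using $a = r + kb^n + sb$. A brief calculation will show that the numerator of $p - qs - ka^{n-1}$ collapses to $ka^{n-1}(a - kb^n - sb - r) = 0$, and the numerator of $pb - qa + rs^{n-1}$ collapses to $rs^{n-1}(r + kb^n + sb - a) = 0$; both are pure bookkeeping once the key substitution is spotted. Finally, (iii) will follow from (ii): under the sign hypotheses, $a > 0$, the denominator $r + kb^n > 0$, and the numerator $s^{n-1}r + kba^{n-1}$ is strictly positive thanks to its $kba^{n-1}$ term, so $q > 0$; combined with $qa - pb = rs^{n-1} \geq 0$ from (ii) and $b > 0$, this yields $p/q \leq a/b$. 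There is no real obstacle: the proof is routine algebra once the single substitution $a - sb = r + kb^n$ is noticed, and the only care needed is to treat $p$ and $q$ as polynomial quotients rather than dividing naively by an expression that might a priori vanish.
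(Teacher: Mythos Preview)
Your argument is correct. The key identity $a-sb=r+kb^n$ is exactly the pivot the paper uses as well, and your verifications of (i), (ii), (iii) go through cleanly once one clears the common denominator $r+kb^n$; your sign analysis for (iii) (in particular $q>0$ via the term $kba^{n-1}$) matches the paper's.

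The organization differs, however. The paper introduces an auxiliary sequence $x_0,\dots,x_n$ defined by $x_0=1$, $x_{m+1}=\tfrac{a}{b}x_m-\tfrac{rs^m}{b}$, identifies $p=x_n$, $q=x_{n-1}$, and solves the recursion to obtain the closed form
\[
x_m \;=\; s^m + kb^{\,n-m}\sum_{i=1}^{m} a^{i-1}(sb)^{m-i},
\]
from which polynomiality is read off; identity (ii) is then literally the recursion at $m=n-1$, and (i) is checked from the closed form. Your route bypasses the recursion entirely: you get polynomiality from the binomial expansion of $a^n=((r+kb^n)+sb)^n$ and then check (i) and (ii) by brute-force cancellation over the common denominator. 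Your approach is more elementary and self-contained for this lemma alone; the paper's recursive viewpoint yields a uniform formula for all $x_m$ (not just $p$ and $q$) expressed in $a,b,s,k$, which is conceptually tidier and makes (ii) immediate rather than a computation.
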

\begin{proof}
Define the sequence $x_0, \dots, x_n$ by $x_0 = 1$, $x_{m+1} = \tfrac ab x_m - \tfrac{rs^m}{b}$. Inductively it follows that
\begin{equation}\label{eq:rec-sol}
x_m = \frac{s^m r + k b^{n-m} a^m}{a-sb}.
\end{equation}
We see that $p = x_n$ and $q = x_{n-1}$.
We substitute $r = a - sb - kb^n$ in \autoref{eq:rec-sol} and rewrite to get
\begin{equation}\label{eq:rec-rw}
x_m = s^m + k b^{n-m} \frac{a^m - (sb)^m}{a-sb}
= s^m + kb^{n-m} \sum_{i=1}^m a^{i-1}(sb)^{m-i}.
\end{equation}
It follows that indeed $p$ and $q$ are polynomials in the parameters $k,b,r,s$. One verifies claim \ref{eq:num1} using \autoref{eq:rec-rw}. Claim~\ref{eq:num2} follows directly from the recursive definition of $(x_m)$ applied to $m = n-1$.

Finally, claim~\ref{eq:num3} follows by observing that under the conditions $0 < bq$. If we divide both sides of claim~\ref{eq:num2} by $bq$ we find $p/q = a/b - rs^{n-1}/bq$, which proves the statement.
\end{proof}

We now define matrices $X,Y \in \mathbb{R}^{n\times n}$ using the matrix $M_\alpha$ defined in \autoref{def:Malpha}.

\begin{definition}
    We define
    \[
    X = \frac{k a^{n-1}}{s} D_{s/a} M_{(rs^n)/(k a^n)} D_{a/s}
    \quad\text{ and }\quad 
    Y = -k b^{n-1} D_{1/b} M_{r/(kb^n)} D_b.
    \]
\end{definition}

One can directly verify that the entries of $X$ and $Y$ are polynomials in the parameters by observing that 
\[
X_{ij} = \begin{cases} 
      r a^{j-i-1} s^{n-(j-i+1)} & \text{if } i < j, \\
      0 & \text{if } i = j, \\
      -k a^{n-(i-j+1)}s^{i-j-1} & \text{if } i > j
   \end{cases}
    \quad
    \text{ and }
    \quad 
    Y_{ij} = \begin{cases} 
      -r b^{j-i-1} & \text{if } i < j, \\
      0 & \text{if } i = j, \\
      k b^{n-(i-j+1)}& \text{if } i > j.
   \end{cases}
\]

\begin{lemma}\label{lem:X-P0}
    Let $n \in \mathbb{N}$, $r,s \in \mathbb{R}_{\geq 0}$, and $k,b \in \mathbb{R}_{\geq 1}$ with $r \leq b$, then $X$ is a $P_0$-matrix.
\end{lemma}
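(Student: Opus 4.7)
The plan is to recognize $X$ as a positive scalar multiple of a diagonal conjugate of $M_\alpha$, reducing the claim to \autoref{lem:M_alpha} via the invariance of principal minors under diagonal conjugation. Assuming first that $s>0$, since $D_\beta^{-1}=D_{1/\beta}$, the definition of $X$ rewrites as
\[
X \;=\; c\,D\,M_\alpha\,D^{-1}, \qquad c=\frac{ka^{n-1}}{s}, \quad D=D_{s/a},\quad \alpha=\frac{rs^n}{ka^n}.
\]
For any $S\subseteq[n]$, the principal submatrix satisfies $(DM_\alpha D^{-1})_S = D_S(M_\alpha)_S D_S^{-1}$, which has the same determinant as $(M_\alpha)_S$, so $\det(X_S)=c^{|S|}\det((M_\alpha)_S)$.

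The second step is to verify $\alpha\in[0,1]$ so that \autoref{lem:M_alpha} applies. Non-negativity is immediate, and for $\alpha\leq 1$ it suffices to show $rs^n\leq ka^n$. From $a=kb^n+sb+r\geq sb$ we get $a^n\geq s^n b^n$; and the hypotheses $r\leq b$, $b\geq 1$, $k\geq 1$ give $kb^n\geq b\geq r$, so $ka^n\geq ks^n b^n\geq rs^n$. With $c>0$ and $\det((M_\alpha)_S)\geq 0$, this yields $\det(X_S)\geq 0$ for every $S\subseteq[n]$, establishing the $P_0$-property when $s>0$.

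I would handle the remaining boundary case $s=0$ by continuity: the entries of $X$ are polynomials in $(k,b,r,s)$ (as the excerpt remarks right after the definition of $X$), so each $\det(X_S)$ is a polynomial in $s$; non-negativity on $s>0$ with the other parameters fixed forces non-negativity at $s=0$. I do not expect a deep obstacle anywhere: the argument is essentially a clean reduction to the already-proven \autoref{lem:M_alpha}, and the only substantive work is the inequality chain $rs^n\leq ka^n$, together with the polynomial-continuity patch for $s=0$ where the diagonal-conjugation description of $X$ degenerates.
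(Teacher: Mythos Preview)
Your argument is correct and follows essentially the same route as the paper: reduce to $s>0$ by continuity (the paper phrases this as the $P_0$ condition being closed), verify $(rs^n)/(ka^n)\in[0,1]$ so that \autoref{lem:M_alpha} applies, and then use that sandwiching by positive diagonal matrices and multiplying by a positive scalar preserve the $P_0$ property. Your explicit computation $\det(X_S)=c^{|S|}\det((M_\alpha)_S)$ makes the diagonal-conjugation step more concrete than the paper's one-line appeal to invariance, and your inequality chain for $rs^n\le ka^n$ is a harmless rearrangement of the paper's $rs^n\le bs^n\le (bs)^n\le a^n\le ka^n$.
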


\begin{proof}
    Because the $P_0$ property %
    is a closed condition within the space of $n \times n$ matrices, we may assume that $s$ is nonzero. %
    We claim that $M_{(rs^n)/(k a^n)}$ is a $P_0$-matrix. 
    Then, since the $P_0$ property %
    is preserved under both left and right multiplication with nonnegative diagonal matrices, $X$ is $P_0$. Note that $(rs^n)/(k a^n) \in [0,1]$. Indeed, $rs^n \leq b s^n \leq (bs)^n \leq (k\cdot b^n + s \cdot b + r)^n = a^n \leq ka^n.$
    Then the claim follows from \autoref{lem:M_alpha}.
\end{proof}

We now define the matrices $A,B$. The columns of $A$ represent a basis for the lattice that we are interested in. We subsequently prove that the matrices $A,B$ satisfy the requirements of \autoref{lem:matrixtolattice}.

\begin{definition}
    \label{def:AB}
    We define 
    \[
        A = qI_n + X 
        \quad\text{ and }\quad
        B = \frac{a-r}{b} I_n + Y.
    \]
\end{definition}

\begin{theorem}
    \label{th:main-constr}We have 
    
    \begin{enumerate}[label=\upshape(\roman*)]
        \item\label{th:item:i}   $\lambda_\infty(A) \geq q$, whenever $r,s \in \mathbb{R}_{\geq 0}$, and $k,b \in \mathbb{R}_{\geq 1}$ with $r \leq b$, 
        \item\label{th:item:ii} $AB=pI_n$,
        \item\label{th:item:iii} $\det B=p $.
    \end{enumerate}
\end{theorem}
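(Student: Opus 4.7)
The plan is to handle the three parts in order. For (i), note that $A = qI_n + X$, and \autoref{lem:X-P0} tells us $X$ is a $P_0$-matrix under the stated hypotheses; hence \autoref{lem:inf-dist-lb} applied with $M = X$ and our given $q>0$ yields that $A$ is invertible and $\lambda_\infty(\mathcal{L}(A)) \geq q$. So (i) is immediate from the tools we have already developed.

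For (ii) and (iii), the key idea is to use \autoref{lem:Malphafrac} to rewrite \emph{both} $A$ and $B$ as a product of two first-order expressions in the \emph{same} shift matrix $R_{r/k}$, so that $AB$ telescopes and $\det B$ splits. For $A = qI_n + X$, the factor $X$ already has the form $\tfrac{ka^{n-1}}{s} D_{s/a} M_{rs^n/(ka^n)} D_{s/a}^{-1}$, so I would apply \autoref{lem:Malphafrac} with $x = q$, $y = ka^{n-1}/s$, $\alpha = rs^n/(ka^n)$, $\beta = s/a$. A direct computation gives $\alpha\beta^{-n} = r/k$, and using \autoref{lem:pqdef}(i),(ii), one obtains $x+y = p/s$ and $-x+y\alpha = -pb/a$, leading to
\[
A \;=\; p\,(R_{r/k} - b I_n)\,(sR_{r/k} - aI_n)^{-1}.
\]
Similarly, $B = \frac{a-r}{b}I_n + Y$ with $Y = -kb^{n-1} D_{1/b} M_{r/(kb^n)} D_{1/b}^{-1}$ fits \autoref{lem:Malphafrac} with $x = (a-r)/b$, $y = -kb^{n-1}$, $\alpha = r/(kb^n)$, $\beta = 1/b$. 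Here $\alpha\beta^{-n} = r/k$ again, and using $a - r = kb^n + sb$ one verifies $x+y = s$ and $-x+y\alpha = -a/b$, giving
\[
B \;=\; (sR_{r/k} - aI_n)\,(R_{r/k} - bI_n)^{-1}.
\]

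Part (ii) then follows at once: the middle two factors in $AB$ cancel, leaving $AB = pI_n$. For part (iii), multiplicativity of the determinant and \autoref{lem:det} give
\[
\det B \;=\; \frac{\det(sR_{r/k} - aI_n)}{\det(R_{r/k} - bI_n)} \;=\; \frac{(-s)^n (r/k) + (-a)^n}{(-1)^n (r/k) + (-b)^n} \;=\; \frac{s^n r + ka^n}{r + kb^n} \;=\; p,
\]
where the last equality is the definition of $p$ in \autoref{def:pqdef}.

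The only real obstacle is bookkeeping: matching parameters in \autoref{lem:Malphafrac} so that both $A$ and $B$ factor through the \emph{same} $R_{r/k}$, and confirming the two algebraic identities of \autoref{lem:pqdef} are exactly what is needed to simplify $x+y$ and $-x+y\alpha$ for $A$. A minor issue is the appearance of $s$ in denominators in the expression for $X$; since $p,q$ and the entries of $A,B$ are polynomial in $k,b,r,s$ (\autoref{lem:pqdef} and the polynomial form of the entries of $X,Y$ recorded after their definition), the identities $AB = pI_n$ and $\det B = p$ are polynomial identities and so extend from the generic case $s\neq 0$ to all real values of the parameters by continuity.
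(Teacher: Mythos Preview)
Your proof is correct and follows essentially the same approach as the paper: part~(i) is the direct application of \autoref{lem:inf-dist-lb} via \autoref{lem:X-P0}, and for (ii)--(iii) you use \autoref{lem:Malphafrac} to factor both $A$ and $B$ through the common shift matrix $R_{r/k}$, then telescope and apply \autoref{lem:det}. The only cosmetic difference is that you invoke \autoref{lem:pqdef}(i),(ii) while simplifying $A$ itself (obtaining $A = p(R_{r/k}-bI_n)(sR_{r/k}-aI_n)^{-1}$ directly), whereas the paper keeps $A$ in the form $\bigl((qs+ka^{n-1})R_{r/k}+(rs^{n-1}-qa)I_n\bigr)(sR_{r/k}-aI_n)^{-1}$ and applies those identities only after forming the product $AB$; your handling of the $s=0$ degeneracy by continuity of polynomial identities is exactly the convention the paper adopts as well.
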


\begin{proof}
    \ref{th:item:i} Apply \autoref{lem:inf-dist-lb} to $A = qI_n + X$ using that $X$ is a $P_0$ matrix (\autoref{lem:X-P0}).
    
    \ref{th:item:ii} We expand $A$ and $B$ as
    \begin{align*}
    A &= q I_n + \frac{ka^{n-1}}{s} D_{s/a} M_{(rs^n)/(ka^n)} D_{a/s},\\
    B &= \frac{a-r}{b} I_n - k b^{n-1} D_{1/b} M_{r/(kb^n)} D_b.
    \end{align*}
    Applying \autoref{lem:Malphafrac}, and simplifying gives
    \begin{align}
        A &= \bigl((qs + k a^{n-1})R_{r/k} + (r s^{n-1}- qa)I_n\bigr)\cdot (s R_{r/k} - a I_n)^{-1},\nonumber\\
        B &= (s R_{r/k} - a I_n) \cdot (R_{r/k} - b I_n)^{-1}.\label{eq:B}
    \end{align}
    Then
    \[
    AB = \bigl((qs + k a^{n-1})R_{r/k} + (r s^{n-1}- qa)I_n\bigr) \cdot (R_{r/k} - b I_n)^{-1}.
    \]
    By \autoref{lem:pqdef}, $qs + ka^{n-1} = p$ and  $r s^{n-1} - qa = -pb$, so we find $AB = pI_n$.

    \ref{th:item:iii} Using~\autoref{eq:B} and $\det(x R_\alpha + y I_n) = (-x)^n \alpha + y^n$ (\autoref{lem:det}) we find 
    \begin{align*}
        \det(B) &= \frac{\det(s R_{r/k} - a I_n)}{\det(R_{r/k} - b I_n)} 
        = \frac{(-s)^n \tfrac{r}{k} +(-a)^n}{(-1)^n\tfrac{r}{k} + (-b)^n}
        = \frac{s^n \tfrac{r}{k} +a^n}{\tfrac{r}{k} + b^n}
        = p.\qedhere
    \end{align*}

\end{proof}

We can thus combine \autoref{th:main-constr} and \autoref{lem:matrixtolattice} to obtain the following corollary.

\begin{corollary}[\autoref{prop:construction-intro}]\label{cor:construction}
 For nonnegative integers~$n$, $k$, $b$, $r$, $s$ with $n,k,b \geq 1$ and $r \leq b$, and with~$p$ and~$q$ as given in \autoref{def:pqdef}, we have
        $\alphag(E_{p/q}^{\boxtimes n}) \geq p.$
\end{corollary}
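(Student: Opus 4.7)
The corollary follows essentially immediately from combining \autoref{lem:matrixtolattice} with \autoref{th:main-constr}. My plan is a packaging argument in two steps.

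First, I would verify the integrality hypotheses needed to apply \autoref{lem:matrixtolattice}. Given nonnegative integer parameters with $n,k,b \geq 1$ and $r \leq b$, \autoref{lem:pqdef} guarantees that $p$ and $q$ are polynomials in $k,b,r,s$ with integer coefficients, hence positive integers (positivity is immediate from the explicit expressions in \autoref{def:pqdef}, since $ka^n$ and $kb^n$ are strictly positive while every other summand is nonnegative). The closed-form entries of $X$ and $Y$ shown just after their definition are likewise polynomials in $k,b,r,s$ with integer coefficients, so $X, Y \in \mathbb{Z}^{n\times n}$. For the diagonal correction in $B = \frac{a-r}{b}I_n + Y$, observe that $a - r = b(kb^{n-1} + s)$, so $(a-r)/b \in \mathbb{Z}_{\geq 0}$; hence $A = qI_n + X$ and $B$ both lie in $\mathbb{Z}^{n\times n}$.

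Second, I would feed the three conclusions of \autoref{th:main-constr} into \autoref{lem:matrixtolattice}: item (ii) supplies $AB = pI_n$, item (i) supplies $\lambda_\infty(\mathcal{L}(A)) \geq q$, and item (iii) supplies $\lvert \det B\rvert = p$. A minor technicality is that \autoref{lem:matrixtolattice} is phrased with the equality $\lambda_\infty(\mathcal{L}(A)) = q$, but a glance at its proof shows only the bound $|v|_\infty \geq q$ for nonzero $v \in \Lambda$ is used, which remains valid under $\lambda_\infty \geq q$. The lemma therefore yields $\alphag(E_{p/q}^{\boxtimes n}) \geq \lvert \det B\rvert = p$.

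The genuine work has already been done in \autoref{th:main-constr}, which in turn relies on the $P_0$-matrix machinery and the identities packaged in \autoref{lem:pqdef}, \autoref{lem:Ma} and \autoref{lem:Malphafrac}. At this final step there is no substantive obstacle: the corollary is a direct consequence, and the only thing to be careful about is the bookkeeping that $A, B$ are genuinely integer matrices so that \autoref{lem:matrixtolattice} applies.
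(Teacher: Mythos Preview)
Your proposal is correct and follows exactly the route the paper indicates: the paper's proof is the single sentence ``combine \autoref{th:main-constr} and \autoref{lem:matrixtolattice}'', and you have carefully spelled out the bookkeeping (integrality of $p,q,A,B$, the observation that $(a-r)/b\in\ZZ$, and the fact that the proof of \autoref{lem:matrixtolattice} only uses $\lambda_\infty\geq q$) that makes this combination go through. There is nothing to add.
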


\section{Proof of the limit theorem\label{sec:limit}}

In this section we prove \autoref{th:intro-circle-limit}. We will first prove that \autoref{th:intro-circle-limit} is equivalent to \autoref{th:intro-lattice-limit}. We recall that \autoref{th:intro-lattice-limit} directly implies \autoref{eq:intro-2}.

\subsection{Equivalence of 
\autoref{th:intro-lattice-limit} and \autoref{th:intro-circle-limit}}

We recall that there is a cohomomorphism from $E_{p/q}$ to $E_{s/t}$ if and only if $p/q \leq s/t$. In particular,  $p/q \leq s/t$ implies $\alpha(E_{p/q}) \leq \alpha(E_{s/t})$. The same is not true however for $\alphag$. For instance, as noted in \autoref{rem:alphag}, $\alphag(E_{5/2}) = 1$, while $\alphag(E_{10/4}) = 2$. The reason that $\alphag$ is not monotone under the cohomomorphism order is because cohomomorphisms (and indeed graph isomorphisms) need not preserve subgroup structure. Still, when restricted to graphs of the form $E_{p/q}$, there is monotonicity in the following sense. 

\begin{lemma}
    \label{lem:map-subgroup-up}
    Let $a_1, \dots, a_n,b_1, \dots, b_n\in \mathbb{Z}_{\geq 1}$ and let $p,q \in \mathbb{Z}_{\geq 1}$ such that for every $i$, $2 \leq a_i/b_i \leq p/q$. Then there exists $s,t \in \mathbb{Z}_{\geq 1}$ with $s/t = p/q$ such that for each $i$ there is a  group homomorphism from $\mathbb{Z}/a_i\mathbb{Z}$ to $\mathbb{Z}/s\mathbb{Z}$ that is a graph cohomomorphism from $E_{a_i/b_i}$ to $E_{s/t}$.
\end{lemma}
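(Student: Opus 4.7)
The plan is to pick $s$ to be any common multiple of $p, a_1, \dots, a_n$---for concreteness $s = p \cdot \operatorname{lcm}(a_1, \dots, a_n)$---and set $t = sq/p$. Then $t$ is a positive integer (because $p \mid s$), and $s/t = p/q$ by construction. For each $i$, since $a_i \mid s$, the map $\phi_i \colon \mathbb{Z}/a_i\mathbb{Z} \to \mathbb{Z}/s\mathbb{Z}$ given by $x \mapsto (s/a_i)\, x \bmod s$ is a well-defined (and in fact injective) group homomorphism, so these choices already satisfy the group-theoretic part of the conclusion.

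\textbf{Checking the cohomomorphism property.} I would then show that each $\phi_i$ is a graph cohomomorphism from $E_{a_i/b_i}$ to $E_{s/t}$ by a direct distance-scaling argument. For any $x, y \in \mathbb{Z}/a_i\mathbb{Z}$, the cyclic distance of $\phi_i(x) - \phi_i(y)$ modulo $s$ equals $(s/a_i)$ times the cyclic distance of $x - y$ modulo $a_i$: the latter lies in $[0, a_i/2]$, so scaling by $s/a_i$ yields a value in $[0, s/2]$, which is therefore itself the cyclic distance modulo $s$. Now if $\{x, y\}$ is a non-edge of $E_{a_i/b_i}$, then either $x = y$ (so $\phi_i(x) = \phi_i(y)$) or the cyclic distance of $x - y$ modulo $a_i$ is at least $b_i$. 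In the latter case the cyclic distance of $\phi_i(x) - \phi_i(y)$ modulo $s$ is at least $(s/a_i)\, b_i$, and the hypothesis $a_i/b_i \leq p/q = s/t$ rearranges to $(s/a_i)\, b_i \geq t$, which is exactly the non-edge condition in $E_{s/t}$.

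\textbf{Main obstacle.} I do not anticipate a substantial obstacle: the whole argument reduces to the distance-scaling identity above, whose validity hinges on the scaled cyclic distance not overshooting $s/2$, a property that holds automatically because cyclic distances modulo $a_i$ are at most $a_i/2$. The hypothesis $a_i/b_i \geq 2$ (equivalently $b_i \leq a_i/2$) plays only the background role of ensuring the graphs $E_{a_i/b_i}$ have edges at all, so that the non-edge condition ``cyclic distance at least $b_i$'' is attainable and the statement is non-vacuous.
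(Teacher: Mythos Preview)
Your proof is correct and follows essentially the same approach as the paper: the paper sets $s = p\cdot a_1\cdots a_n$ and $t = q\cdot a_1\cdots a_n$ (using the product rather than the lcm) and takes the same multiplication-by-$s/a_i$ map, leaving the cohomomorphism verification as ``not hard to check,'' which is precisely the distance-scaling argument you spell out.
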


\begin{proof}
    Let $A = a_1 \cdots a_n $.
    It is not hard to check that the map $\mathbb{Z}/a_i\mathbb{Z} \to \mathbb{Z}/Ap\mathbb{Z}$ given by multiplication by $Ap/a_i$ is a cohomomorphism from $E_{a_i/b_i}$ to $E_{Ap/Aq}$. 
\end{proof}

For a given fraction $p/q$ we are interested in the value of $\sup_{a,b: \, a/b=p/q} \Thetag(E_{a/b})$, which we recall gives a lower bound on $\Theta(E_{p/q})$. The previous lemma shows that this value is nondecreasing in $p/q$. It is slightly inconvenient having to consider all equivalent fractions $\{a,b: \, a/b=p/q\}$. To remedy this we will show that we may equivalently consider $\Thetag$ of a well chosen infinite graph.

We recall the definition of the circle graph $E_{x}^{\open}$. This graph is defined as the Cayley graph on $\mathbb{T} = \mathbb{R}/\mathbb{Z}$ where distinct vertices are adjacent if and only if their distance modulo $1$ is strictly less than $1/x$. The following lemma is based on~\cite[Lemma~5.4~(a)]{deboer2024asymptotic}.
\begin{lemma}
    \label{lem:circle-graph-finite-graph-iso}
    Let $x \in \RR_{\geq2}$, $N \in \ZZ_{\geq2}$, and let $S$ be the finite subgroup of $\mathbb{T}$ generated by $1/N$. Then there exists a group isomorphism from $S$ to $\mathbb{Z}/N\mathbb{Z}$ that is a graph isomorphism from $E_x^\open[S]$ to $E_{N/\lceil N/x \rceil}$.
\end{lemma}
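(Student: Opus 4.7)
The plan is to exhibit the obvious candidate isomorphism, multiplication by $1/N$, and to check that the adjacency condition in the circle graph translates exactly into the integer adjacency condition defining $E_{N/\lceil N/x\rceil}$.

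First I would define $\phi\colon \mathbb{Z}/N\mathbb{Z} \to S$ by $\phi(k) = k/N \bmod 1$. Since $S$ is the cyclic subgroup of $\mathbb{T}$ of order $N$ generated by $1/N$, this is a bijective group homomorphism. It then remains to verify that $\phi$ is a graph isomorphism from $E_{N/\lceil N/x\rceil}$ to $E_x^{\open}[S]$.

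For this, fix $k,\ell \in \mathbb{Z}/N\mathbb{Z}$ and observe that the distance in $\mathbb{T}$ between $\phi(k)$ and $\phi(\ell)$ equals $d/N$, where $d = \min(|k-\ell|\bmod N,\, N - (|k-\ell|\bmod N))$ is the cyclic distance between $k$ and $\ell$ in $\mathbb{Z}/N\mathbb{Z}$. Therefore $\phi(k)$ and $\phi(\ell)$ are distinct and adjacent in $E_x^\open$ if and only if $0 < d/N < 1/x$, i.e.\ $0 < d < N/x$. Since $d$ is a nonnegative integer, this is equivalent to $0 < d < \lceil N/x \rceil$, which is precisely the adjacency condition for $k,\ell$ in $E_{N/\lceil N/x\rceil}$.

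The only slightly delicate point is the equivalence $d < N/x \iff d < \lceil N/x \rceil$ for integer $d$. This splits into two trivial cases: if $N/x \in \mathbb{Z}$, then $\lceil N/x \rceil = N/x$ and the two sides coincide; otherwise $\lceil N/x \rceil = \lfloor N/x \rfloor + 1$, and both inequalities reduce to $d \leq \lfloor N/x \rfloor$. There is no substantive obstacle in the proof beyond this minor ceiling manipulation.
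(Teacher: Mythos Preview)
Your proof is correct and essentially identical to the paper's: both use the natural map $k \leftrightarrow k/N$ between $\mathbb{Z}/N\mathbb{Z}$ and $S$ (you write the inverse of the paper's $f$) and then reduce the strict inequality $d/N<1/x$ for an integer $d$ to $d<\lceil N/x\rceil$. Your explicit two-case treatment of the ceiling step is a slight elaboration of what the paper does in one line.
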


\begin{proof}
    Take $v \in S$ let $v' \in \mathbb{R}$ be a representative of $v$ and define $f(v) \in \mathbb{Z}/N\mathbb{Z}$ as the integer $Nv'$ modulo $N$. It is not hard to see that this does not depend on the choice of~$v'$. 

    One verifies directly that the map $f$ is a group isomorphism from $S$ to $\mathbb{Z}/N\mathbb{Z}$. Moreover, distinct $u,v \in S$ are adjacent in $E_x^\open[S]$ if and only if there are representatives $u',v' \in \mathbb{R}$ such that $|u'-v'| < 1/x$, which is the case if and only if $|Nu'-Nv'| < N/x$.
    Since the distance between $Nu'$ and $Nv'$ is an integer, this condition holds if and only if $|Nu'-Nv'| < \lceil N/x \rceil$, which concludes the proof.
\end{proof}

It follows from \autoref{lem:circle-graph-finite-graph-iso} that, for any $p/q$, every $E_{a/b}$ with $a/b = p/q$ is a subgraph of $E_{p/q}^\open$ induced by a subgroup. We can prove the following:

\begin{lemma}
    For $p,q \in \mathbb{Z}_{\geq 1}$ with $p/q \in \mathbb{Q}_{\geq 2}$ we have
    \[
    \Thetag(E^\open_{p/q})= \sup \{ \Thetag(E_{a/b}) : a,b \in \NN,\, a/b=p/q\}.
    \]
\end{lemma}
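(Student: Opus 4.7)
The plan is to prove both inequalities separately, using \autoref{lem:circle-graph-finite-graph-iso} as the main tool in each direction. In both directions the key observation is that when $N$ is a multiple of the denominator of $p/q$ in lowest terms, the isomorphism of \autoref{lem:circle-graph-finite-graph-iso} produces a fraction graph $E_{N/\lceil Nq/p\rceil} = E_{N/(Nq/p)}$ that is again equivalent to $E_{p/q}$. Write $p = dp'$, $q = dq'$ with $\gcd(p',q')=1$; then $p'\mid N$ ensures $Nq/p = Nq'/p' \in \ZZ$ and the induced subgraph has fraction exactly $p/q$.

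For the direction $\Thetag(E^{\open}_{p/q}) \geq \sup_{a/b=p/q} \Thetag(E_{a/b})$, fix $a,b\in\NN$ with $a/b=p/q$ and let $S\leq \mathbb{T}$ be the cyclic subgroup generated by $1/a$. By \autoref{lem:circle-graph-finite-graph-iso} applied with $N=a$ and $x=p/q$, there is a group and graph isomorphism $S \cong \ZZ/a\ZZ$ carrying $E^{\open}_{p/q}[S]$ to $E_{a/b}$ (since $aq/p = b \in \ZZ$). Taking $n$-th strong powers on both sides, this isomorphism extends to $(E^\open_{p/q})^{\boxtimes n}[S^n] \cong E_{a/b}^{\boxtimes n}$ as Cayley graphs, so any subgroup independent set in $E_{a/b}^{\boxtimes n}$ transfers to a subgroup independent set of the same size in $(E^\open_{p/q})^{\boxtimes n}$. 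Hence $\alphag((E^\open_{p/q})^{\boxtimes n})\geq \alphag(E_{a/b}^{\boxtimes n})$ for every $n$, yielding the inequality on $\Thetag$.

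For the direction $\Thetag(E^{\open}_{p/q}) \leq \sup_{a/b=p/q} \Thetag(E_{a/b})$, let $I\leq \mathbb{T}^n$ be a subgroup that is independent in $(E^\open_{p/q})^{\boxtimes n}$. First observe that $I$ must be finite: if $I$ were infinite, then by compactness of $\mathbb{T}^n$ there would exist a sequence of distinct elements $x_k\in I$ converging to some point, so the differences $x_k-x_j\in I$ would be nonzero elements with every coordinate arbitrarily close to $0$ on $\mathbb{T}$, contradicting the independence condition (which requires some coordinate to have $\mathbb{T}$-distance $\geq q/p$). Being a finite subgroup of $\mathbb{T}^n$, $I$ is contained in $(\tfrac{1}{M}\ZZ/\ZZ)^n$ for some $M\in\NN$; replacing $M$ by $\mathrm{lcm}(M,p')$, we may assume $p'\mid M$. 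By \autoref{lem:circle-graph-finite-graph-iso} applied to $N=M$, the induced subgraph on $(\tfrac{1}{M}\ZZ/\ZZ)^n$ is isomorphic as Cayley graph to $E_{M/(Mq/p)}^{\boxtimes n}$, and $M/(Mq/p)=p/q$ by the choice of $M$. Therefore $I$ is a subgroup independent set in $E_{a/b}^{\boxtimes n}$ with $(a,b)=(M,Mq/p)$ satisfying $a/b=p/q$, so $\alphag((E^\open_{p/q})^{\boxtimes n}) \leq \sup_{a/b=p/q} \alphag(E_{a/b}^{\boxtimes n})$.

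Finally, taking $n$-th roots and suprema, and swapping the two suprema on the right (which is legitimate since both are suprema over independent index sets), yields $\Thetag(E^\open_{p/q}) \leq \sup_{a/b=p/q} \Thetag(E_{a/b})$. I expect the main obstacle to be the finiteness step: one needs to carefully justify that an infinite subgroup independent set in a power of $E^\open_{p/q}$ cannot exist, which uses compactness of $\mathbb{T}^n$ together with the combinatorial independence condition for the strong product. The bookkeeping of denominators to force $p'\mid M$ is routine once the correspondence in \autoref{lem:circle-graph-finite-graph-iso} is in hand.
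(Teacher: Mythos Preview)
Your argument is correct. The $\geq$ direction matches the paper's exactly. For the $\leq$ direction you take a genuinely different and somewhat more direct route: the paper projects $I$ coordinate-wise to cyclic groups of possibly different orders $N_1,\dots,N_n$, lands in a product $E_{N_1/\lceil qN_1/p\rceil}\boxtimes\cdots\boxtimes E_{N_n/\lceil qN_n/p\rceil}$, and then invokes \autoref{lem:map-subgroup-up} to push this into a single $E_{s/t}^{\boxtimes n}$ with $s/t=p/q$. You instead embed the finite group $I$ into a single cube $(\tfrac{1}{M}\ZZ/\ZZ)^n$ and arrange $p'\mid M$ so that the ceiling in \autoref{lem:circle-graph-finite-graph-iso} is exact and the fraction comes out to $p/q$ on the nose, bypassing \autoref{lem:map-subgroup-up} entirely. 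This is a cleaner packaging. Conversely, the paper's route has the advantage of showing that \autoref{lem:map-subgroup-up} is the ``right'' monotonicity tool for the subgroup setting, which it also uses elsewhere. One more point in your favor: you make explicit the finiteness of $I$ (via compactness of $\mathbb{T}^n$ and the independence condition), which the paper tacitly assumes when asserting that each projection $I_i$ is generated by some $1/N_i$.
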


\begin{proof}
    It follows from \autoref{lem:circle-graph-finite-graph-iso} that any $E_{\smash{a/b}}^{\boxtimes n}$ with $a/b = p/q$ is isomorphic, both as a graph and as a group, to a subgraph of $(E_{p/q}^{\open})^{\boxtimes n}$ induced by a subgroup and thus 
    \[
        \sup \{ \Thetag(E_{a/b}) : a,b \in \NN,\,  a/b=p/q\} \leq \Thetag(E^\open_{p/q}).
    \]
    
    For the other direction, let $I$ be a subgroup independent set in $(E_{\smash{p/q}}^{\open})^{\boxtimes n}$. For $i = 1, \dots, n$ let $I_i = \{x_i : x \in I\} \subseteq \mathbb{T}$ be the $i$th projection of $I$. Then $I_i$ is a subgroup of $\mathbb{T}$ and thus generated by an element $1/N_i$ for some nonnegative integer $N_i$. It follows from \autoref{lem:circle-graph-finite-graph-iso} that $I$ can be mapped to a subgroup independent set of the same size in 
    \[
        E_{N_1 / \lceil q N_1 / p \rceil} \boxtimes \cdots \boxtimes E_{N_n / \lceil q N_n / p \rceil}.
    \]
    Because $N_i / \lceil q N_i / p \rceil \leq p/q$ for all $i$, it now follows from \autoref{lem:map-subgroup-up} that this subgroup independent set can be mapped to a subgroup independent set of the same size in $E_{\smash{s/t}}^{\boxtimes n}$ for some $s/t = p/q$. 
 \end{proof}

It follows that \autoref{th:intro-lattice-limit} and \autoref{th:intro-circle-limit} are equivalent. To prove \autoref{th:intro-circle-limit} we shall repeatedly use the following.

\begin{lemma}
    \label{lem:grp-shannon-nondecreasing}
    The map $\mathbb{R}_{\geq 2} \to \mathbb{R}_{\geq 2}, x \mapsto \Thetag(E^\open_x)$ is nondecreasing.
\end{lemma}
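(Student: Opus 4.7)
The plan is to prove the lemma by a direct edge-containment argument, using that both graphs in question live on the same underlying group. The key observation is that, for $x \leq y$, the edge set of $E^\open_y$ is contained in the edge set of $E^\open_x$: indeed, any two distinct vertices $u,v \in \mathbb{T}$ adjacent in $E^\open_y$ satisfy $d(u,v) < 1/y \leq 1/x$, hence are also adjacent in $E^\open_x$. In particular, every independent set of $E^\open_x$ is an independent set of $E^\open_y$, and the same subsets of $\mathbb{T}$ qualify as subgroups for both Cayley graphs.

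Next I would lift this containment to strong powers. Unwinding the definition of the strong product, an edge $((a_1,\ldots,a_n),(b_1,\ldots,b_n))$ of $(E^\open_y)^{\boxtimes n}$ satisfies, in each coordinate $i$, either $a_i = b_i$ or $a_i \sim b_i$ in $E^\open_y$; by the first step the latter implies $a_i \sim b_i$ in $E^\open_x$. Hence $E((E^\open_y)^{\boxtimes n}) \subseteq E((E^\open_x)^{\boxtimes n})$, so every independent set of $(E^\open_x)^{\boxtimes n}$ is an independent set of $(E^\open_y)^{\boxtimes n}$.

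To conclude, note that $(E^\open_x)^{\boxtimes n}$ and $(E^\open_y)^{\boxtimes n}$ are Cayley graphs on the same ambient group $\mathbb{T}^n$, so the class of subsets that form a subgroup of $\mathbb{T}^n$ is identical for the two. Consequently any subgroup independent set of $(E^\open_x)^{\boxtimes n}$ is, verbatim, a subgroup independent set of $(E^\open_y)^{\boxtimes n}$, giving $\alphag((E^\open_x)^{\boxtimes n}) \leq \alphag((E^\open_y)^{\boxtimes n})$ for every $n \geq 1$. Taking $n$-th roots and passing to the supremum yields $\Thetag(E^\open_x) \leq \Thetag(E^\open_y)$.

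I do not anticipate a genuine obstacle; the statement is essentially a monotonicity check and no quantitative estimate is required. The only mild point worth spelling out is that the subgroup condition being compared is intrinsic to the common vertex group $\mathbb{T}^n$, so shrinking the edge set (by enlarging $x$) cannot destroy any witness for $\alphag$.
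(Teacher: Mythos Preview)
Your argument is correct and follows the same approach as the paper's one-line proof: for $x \le y$ the edge set of $E_y^\open$ is contained in that of $E_x^\open$ on the common vertex group $\mathbb{T}$, hence every subgroup independent set of $(E_x^\open)^{\boxtimes n}$ survives verbatim in $(E_y^\open)^{\boxtimes n}$. (The paper phrases this containment as ``$E_x^\open$ is a spanning subgraph of $E_y^\open$'', with the direction inadvertently reversed; your formulation is the correct one.)
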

\begin{proof}
    Whenever $x \leq y$ the graph $E_x^\open$ is a spanning subgraph of $E_y^\open$ and thus any independent set of $(E_x^\open)^{\boxtimes n}$ that is also a subgroup remains independent and a subgroup in $(E_y^\open)^{\boxtimes n}$.
\end{proof}

\subsection{Proof of \autoref{th:intro-circle-limit}}

We define the function 
\[
    \Delta(n, k,b,r,s) = \frac{k\cdot b^n + s\cdot b + r}{b} - \left(\frac{\frac{r}{k}s^n + (k\cdot b^n + s\cdot b + r)^n}{\frac{r}{k}+b^n}\right)^{1/n}.
\]

\begin{lemma}
    \label{lem:shan_bound}
    Let $n,k,b,r,s \in \mathbb{Z}_{\geq 0}$ with $1 \leq n, k,b$. Let $a = k\cdot b^n + s\cdot b + r$, then 
    \[
        \frac{a}{b} - \Thetag(E^{\open}_{a/b}) \leq \Delta(n,k,b,r,s).
    \]
\end{lemma}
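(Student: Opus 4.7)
The plan is to feed the parameters $(n,k,b,r,s)$ into the construction from Corollary~\ref{cor:construction} (which yields $\alphag(E_{p/q}^{\boxtimes n}) \geq p$ for $p,q$ as in Definition~\ref{def:pqdef}) and then propagate the resulting lower bound on subgroup Shannon capacity up to the circle graph $E_{a/b}^{\open}$. In more detail, Corollary~\ref{cor:construction} gives $\Thetag(E_{p/q}) \geq p^{1/n}$. By Lemma~\ref{lem:circle-graph-finite-graph-iso} applied with $N=p$, the graph $E_{p/q}$ is isomorphic (simultaneously as a graph and as a group) to the subgraph of $E_{p/q}^{\open}$ induced by the subgroup of $\mathbb{T}$ generated by $1/p$, so every subgroup independent set in $E_{p/q}^{\boxtimes n}$ embeds as a subgroup independent set of the same size in $(E_{p/q}^{\open})^{\boxtimes n}$; hence $\Thetag(E_{p/q}^{\open}) \geq p^{1/n}$. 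Since $b,k \geq 1$ and $r,s \geq 0$, Lemma~\ref{lem:pqdef}\ref{eq:num3} gives $p/q \leq a/b$, so by monotonicity (Lemma~\ref{lem:grp-shannon-nondecreasing}) we obtain $\Thetag(E_{a/b}^{\open}) \geq \Thetag(E_{p/q}^{\open}) \geq p^{1/n}$.

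The remaining step is purely algebraic. I would rewrite $p = (s^n r + k a^n)/(r + k b^n)$ as $p = (\tfrac{r}{k}\,s^n + a^n)/(\tfrac{r}{k} + b^n)$ by dividing numerator and denominator by $k$, and $a/b = (k b^n + s b + r)/b$ by expanding the definition of $a$. Subtracting then gives
\[
\frac{a}{b} - \Thetag(E_{a/b}^{\open}) \;\leq\; \frac{a}{b} - p^{1/n} \;=\; \frac{k b^n + s b + r}{b} \;-\; \Bigl(\frac{\tfrac{r}{k}\,s^n + a^n}{\tfrac{r}{k}+b^n}\Bigr)^{\!1/n},
\]
and the right-hand side is exactly $\Delta(n,k,b,r,s)$ by definition.

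\paragraph{Main obstacle.}
The proof is essentially a routing exercise; the real content sits in Corollary~\ref{cor:construction} and the circle/fraction-graph correspondence. The one small wrinkle is that Corollary~\ref{cor:construction} carries the extra hypothesis $r \leq b$ that is absent from the statement of Lemma~\ref{lem:shan_bound}. I expect this to be handled either by reading $r \leq b$ as an implicit assumption (in the proof of Theorem~\ref{th:intro-circle-limit} one is free to choose parameters making $r \leq b$ automatic), or by the reduction $r = b m + r'$, $s' = s+m$ with $0 \leq r' < b$, which preserves $a = kb^n + sb + r = kb^n + s'b + r'$ and allows the corollary to be applied to $(n,k,b,r',s')$ instead; the only thing that then needs checking is that the resulting lower bound on $p^{1/n}$ is at least as strong as the one claimed.
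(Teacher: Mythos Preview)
Your approach is essentially identical to the paper's. The paper's proof is the one-line chain
\[
p^{1/n} \leq \alphag(E_{p/q}^{\boxtimes n})^{1/n} \leq \Thetag(E_{p/q}^\open) \leq \Thetag(E_{a/b}^\open),
\]
citing \autoref{cor:construction}, \autoref{lem:grp-shannon-nondecreasing}, and \autoref{lem:pqdef}\ref{eq:num3}, and then observes that $a/b - p^{1/n} = \Delta(n,k,b,r,s)$. You do exactly this, only spelling out the middle inequality via \autoref{lem:circle-graph-finite-graph-iso} rather than appealing (as the paper implicitly does) to the already-proved identity $\Thetag(E^\open_{p/q}) = \sup_{a/b=p/q}\Thetag(E_{a/b})$.

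Your observation about the hypothesis $r \leq b$ is sharp: the paper's own proof invokes \autoref{cor:construction} without addressing this, so it is silently assuming $r \leq b$. Your first resolution---treating it as an implicit hypothesis---matches what the paper does, and indeed in the sole application (the proof of \autoref{th:intro-circle-limit}) one always has $r \in \{0,\dots,b-1\}$. Your second resolution via the substitution $(r,s)\mapsto(r',s') = (r - bm, s+m)$ is a reasonable idea, but as you note it leaves a verification: one would need $p' \geq p$ (equivalently $\Delta(n,k,b,r',s') \leq \Delta(n,k,b,r,s)$), which is not immediate from the definitions and which the paper does not attempt.
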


\begin{proof}
    Let $p,q$ be as in \autoref{def:pqdef}. We use \autoref{cor:construction}, \autoref{lem:grp-shannon-nondecreasing}, and \ref{eq:num3} of \autoref{lem:pqdef} to find 
    \[
        p^{1/n} \leq \alphag(E_{p/q}^{\boxtimes n})^{1/n} \leq \Thetag(E_{p/q}^\open) \leq \Thetag(E_{a/b}^\open),
     \]
    from which the claim follows.
\end{proof}

\begin{lemma}
    \label{lem:Delta_bound}
    For $n,k,b,r,s \in \mathbb{R}_{\geq 0}$ with $1 \leq n,k,b$, and $k,r \leq b$, and $s \leq b^n$ we have:
    \[
        \Delta(n,k,b,r,s) \leq 6b^2/n.
    \]
\end{lemma}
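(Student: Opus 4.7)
The plan is to analyze the quantity $T := \bigl(\frac{(r/k)s^n + a^n}{r/k + b^n}\bigr)^{1/n}$ where $a = kb^n + sb + r$, so that $\Delta = a/b - T$. Setting $\lambda := r/(r + kb^n)$, a direct calculation yields the identity
\[
T^n = \lambda\, s^n + (1-\lambda)(a/b)^n,
\]
exhibiting $T^n$ as a convex combination of $s^n$ and $(a/b)^n$. Factoring out $(a/b)^n$ gives $T = (a/b)\bigl(1 - \lambda(1 - (bs/a)^n)\bigr)^{1/n}$. Since $a/b = kb^{n-1} + s + r/b \geq s$, the bracketed quantity lies in $[1-\lambda, 1]$, and hence
\[
\Delta \;\leq\; (a/b)\bigl(1 - (1-\lambda)^{1/n}\bigr).
\]

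The main obstacle is to bound $1 - (1-\lambda)^{1/n}$ by something of order $\lambda/n$. At first glance this looks like Bernoulli's inequality, but Bernoulli goes the wrong way here (it gives a \emph{lower} bound $1 - (1-\lambda)^{1/n} \geq \lambda/n$). The plan is to first use the hypotheses to show $\lambda \leq 1/2$: since $k, b \geq 1$ and $n \geq 1$, one has $kb^n \geq b \geq r$, so $r/(r+kb^n) \leq 1/2$. With this in hand, apply the mean value theorem to $t \mapsto t^{1/n}$ on $[1-\lambda, 1]$: for some $\xi$ in this interval, $1 - (1-\lambda)^{1/n} = (\lambda/n)\,\xi^{1/n-1} \leq (\lambda/n)(1-\lambda)^{1/n-1}$, where we used that $1/n - 1 \leq 0$ makes $\xi \mapsto \xi^{1/n-1}$ nonincreasing. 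Combining with $1-\lambda \geq 1/2$ gives $(1-\lambda)^{1/n-1} \leq 2^{1-1/n} \leq 2$, hence $1 - (1-\lambda)^{1/n} \leq 2\lambda/n$.

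The final step is elementary bookkeeping. Using $\lambda \leq r/(kb^n) \leq 1/(kb^{n-1})$ (from $r \leq b$) and $a/b = kb^{n-1} + s + r/b \leq 3b^n$ (from $k, r \leq b$, $s \leq b^n$, and $b \geq 1$), one concludes
\[
\Delta \;\leq\; (a/b)\cdot\frac{2\lambda}{n} \;\leq\; 3b^n \cdot \frac{2}{k b^{n-1}\, n} \;=\; \frac{6b}{kn} \;\leq\; \frac{6b}{n} \;\leq\; \frac{6b^2}{n},
\]
where the last inequality uses $b \geq 1$. The argument in fact yields the slightly stronger bound $6b/n$, so the stated $6b^2/n$ has room to spare.
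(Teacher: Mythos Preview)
Your proof is correct. Both your argument and the paper's begin with the same reduction: writing $\lambda = r/(r+kb^n)$ (equivalently, dropping the $s^n$ term in the numerator), one arrives at $\Delta \leq (a/b)\bigl(1-(1-\lambda)^{1/n}\bigr)$. The difference lies in how this last factor is estimated. The paper rewrites $1-(1-\lambda)^{1/n}$ as $\bigl((1+x)^{1/n}-1\bigr)/(1+x)^{1/n}$ with $x=r/(kb^n)$, lower-bounds the denominator by $x^{1/n}$, and then uses $(1+x)^{1/n}\leq 1+2x/n$; this somewhat lossy denominator bound is what produces the extra factor of $b$ and leads to $6b^2/n$. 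Your route---observing $\lambda\leq 1/2$ from $r\leq b\leq kb^n$ and applying the mean value theorem to $t\mapsto t^{1/n}$ to get $1-(1-\lambda)^{1/n}\leq 2\lambda/n$---is cleaner and yields the genuinely sharper bound $\Delta\leq 6b/(kn)\leq 6b/n$. So your approach not only recovers the lemma but improves it by a factor of $b$.
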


\begin{proof}
    Write $a = k\cdot b^n + s\cdot b + r$ then 
    \[
    \Delta(n,k,b,r,s) = \frac{a}{b} - \left(\frac{\frac{r}{k}s^n + a^n}{\frac{r}{k}+b^n}\right)^{1/n} \leq \frac{a}{b} - \left(\frac{a^n}{\frac{r}{k}+b^n}\right)^{1/n} = \frac{a}{b}\left[1 - \frac{1}{\left(\frac{r}{kb^n} + 1\right)^{1/n}}\right].
    \]
    We calculate 
    \[
    1 - \frac{1}{\left(\frac{r}{kb^n} + 1\right)^{1/n}} = 
    \frac{\left(\frac{r}{kb^n} + 1\right)^{1/n} - 1}{\left(\frac{r}{kb^n} + 1\right)^{1/n}} \leq \left(\frac{k}{r}\right)^{1/n} \cdot b \cdot \left[\left(\frac{r}{kb^n} + 1\right)^{1/n} - 1\right].
    \]
    For any $0 \leq x \leq 1$ we have $(1+x)^{1/n} = e^{\log(1+x)/n} \leq e^{x/n} \leq 1 + 2x/n$. We apply this to $x = \frac{r}{kb^n}$ to obtain:
    \[
        \left(\frac{r}{kb^n} + 1\right)^{1/n} - 1 \leq 2\frac{r}{kb^nn}.
    \]
    Combining the above three inequalities we obtain
    \[
        \Delta(n,k,b,r,s) \leq 2 \cdot \frac{a}{b^n} \cdot \left(\frac{r}{k}\right)^{1-1/n} \cdot \frac{1}{n}.
    \]
    Because $1 \leq k \leq b$, $r \leq b$ and $s \leq b^n$ we have $a = k\cdot b^n + s\cdot b + r \leq 3b^{n+1}$ and $r/k \leq b$. The conclusion follows.
\end{proof}

We now prove the main result of the paper, which we restate for convenience.
\circlelimit*
\begin{proof}
    Let $\varepsilon \in (0,1)$ and let $n \in \mathbb{Z}_{\geq 1}$. Fix an integer $b \geq 2$ with $1/\varepsilon < b \leq 2/\varepsilon$. Now suppose $x \geq b^n$. Let $m = \lfloor x \rfloor$ and note that, because $b^n$ is an integer, we also have $m \geq b^n$. Let $r = \lfloor b(x - m )\rfloor$, then $r \in \{0, 1, \dots, b-1\}$ and $x - 1/b \leq (bm + r)/b \leq x$. Let $n_m$ be the largest integer for which $m \geq b^{n_m}$, observe that $n_m \geq n$. It follows from \autoref{lem:shan_bound} and \autoref{lem:Delta_bound} that
    \[
        \frac{bm + r}{b} - \Thetag(E_{(bm+r)/b}^\open) \leq \Delta(n_m + 1, 1,b,r,m-b^{n_m}) \leq \frac{6 b^2}{n_m+1} \leq \frac{24}{n \cdot \varepsilon^2}.
    \]
    We use the fact that $x \mapsto \Thetag(E_{x}^\open)$ is nondecreasing to conclude 
    \[
        x - \Thetag(E_{x}^\open) \leq \frac{1}{b} + (bm + r)/b - \Thetag(E_{(bm + r)/b}^{\open}) < \varepsilon + \frac{24}{n \cdot \varepsilon^2}.
    \]
    This can be made arbitrarily small which concludes the proof.
\end{proof}

\section{Discussion and open problems}

We have developed a group-theoretic approach to the Shannon capacity, which more specifically for fraction graphs and circle graphs involves constructing $p$-ary lattices with large minimum distance. We provided a large family of constructions using this approach, which in particular we used to show that the group-theoretic approach is optimal in the limit of large fraction graphs. 
We discuss natural open problems and directions for variations and extensions. 

\textbf{Optimality.}
The first is whether the group-theoretic approach is powerful enough to determine the Shannon capacity of all fractions graphs, in the following sense.

\begin{problem}\label{prob:1}
    Is the Shannon capacity $\Theta(E_{p/q})$ equal to the supremum of the subgroup Shannon capacity $\Thetag(E_{a/b})$ over all $a,b \in \NN$ such that $a/b = p/q$? %
\end{problem}

Phrased in terms of the circle graphs, \autoref{prob:1} asks if the Shannon capacity and subgroup Shannon capacity coincide on circle graphs, that is, if $\Theta(E^\open_x) = \Thetag(E^\open_x)$ for every~$x \in \QQ_{\geq2}$.
A related problem is whether subgroup Shannon capacity is strictly increasing:

\begin{problem}
    Is $p/q \mapsto \sup_{a,b \in \NN, a/b=p/q} \Thetag(E_{a/b})$ strictly increasing? Is it strictly increasing at $p/q=2$?
\end{problem}

Indeed, Bohman and Holzman \cite{MR1967195} (strengthened by Zhu \cite{zhu2024improved}) proved that $p/q\mapsto \Theta(E_{p/q})$ is strictly increasing at $p/q = 2$. We do not know whether this is true for $p/q \mapsto \sup_{a,b \in \NN, a/b=p/q} \Thetag(E_{a/b})$. In fact, given the constructions we have currently found, it seems possible that $p/q \mapsto \sup_{a,b \in \NN, a/b=p/q} \Thetag(E_{a/b})$ is constant for $p/q \in [2, 2+\eps]$ for some $\eps > 0$.

    \textbf{Perturbations.} 
    Bohman \cite[Theorem 1.4]{bohman2005limit} described a two-parameter family $(d, \ell)$ of independent sets:
        Let $m = \ell 2^d + 2^{d-1} +1$, $p' = \ell m^{d-1} + m^{d-2}(m-1)/2 $ and $q' = 2 \cdot p' / m$, then 
            $\alpha(E_{p' / q'}^{\boxtimes d}) \geq p'.$
    This family does not directly fit in the family of independent sets described in \autoref{sec:constr_proof}. However, we conjecture that it is \emph{close} to it in the following sense. 

    Let $B_{d, \ell}$ be the matrix described in \autoref{def:AB} for parameters $(n,k,b,r,s) = (d, \ell, 2, 1, 2^{d - 2})$. Let $C$ be the $n \times n$ matrix defined by $C_{i,j} = 1/2$ if $i = j+1$ and $C_{i,j} = 0$ otherwise. Furthermore let $\tilde{C}$ be the matrix $C$ with the nonzero entry on the bottom row set to $0$. Then let $B_{d,\ell}' = B_{d, \ell} + 2^{d-2}\tilde{C}(I_d - C)^{-1}$ and $A_{d,\ell}' = p' \cdot B_{d,\ell}'^{-1}$. We conjecture that $(A_{d,\ell}',B_{d,\ell}')$ satisfy the requirements of \autoref{lem:matrixtolattice} with $\det(B_{d,\ell}') = p'$, which would recover the result of \cite{bohman2005limit}. We moreover conjecture that $\lambda_\infty (\mathcal{L}(A_{d,\ell}')) \geq q'$ can be proven by showing that $A_{d,\ell}' - q' I_d$ is a $P_0$ matrix. We have verified both conjectures for $d \in \{ 3,4,\dots, 10\}$ for all~$\ell$.

    \textbf{Quotients.} 
    Let~$G=E_{p/q}^{\boxtimes n}$. Suppose that~$H$ is an independent set in~$G$ which is simultaneously a subgroup of~$(\ZZ/p\ZZ)^n$. Then we have~$\alpha(G)\geq |H| \cdot \alpha(G/H)$, where~$G/H$ is the quotient graph. By modding out subgroups~$H$ of size~$p$ generated by one element, and searching for optimal independent sets in the quotient graph (with Gurobi), we obtain the following improved bounds. The best previously known bounds are from 
    \cite{jurkiewicz2014some}.
    Unfortunately, it becomes difficult to consider higher product powers as the quotient graphs still grow quickly. To mitigate this, one could explore the effect of modding out subgroups generated by two or more elements, potentially leading to smaller quotient graphs.

    \begin{table}[H]
    \centering
\begin{tabular}{llll}
\toprule
    $G$ & $H$ & $\alpha(G/H)$ & $\leq \alpha(G)$  \\
\midrule
    $E_{14/3}^{\boxtimes 3}$ & $\{ t \cdot (1,2,3) \mid t \in \mathbb{Z}/14\mathbb{Z} \}$ & $6$ & $84^* = 14\cdot 6$ \\
    $E_{17/3}^{\boxtimes 3}$ & $\{ t \cdot (1,2,4) \mid t \in \mathbb{Z}/17\mathbb{Z} \}$ & $9$ & $153 = 17\cdot 9$ \\
    $E_{17/4}^{\boxtimes 3}$ & $\{ t \cdot (1,2,4) \mid t \in \mathbb{Z}/17\mathbb{Z} \}$ & $4$ & $68 = 17\cdot 4$ \\
    $E_{19/3}^{\boxtimes 3}$ & $\{ t \cdot (1,2,3) \mid t \in \mathbb{Z}/19\mathbb{Z} \}$ & $12$ & $228 = 19\cdot 12$ \\
    $E_{19/5}^{\boxtimes 3}$ & $\{ t \cdot (1,2,5) \mid t \in \mathbb{Z}/19\mathbb{Z} \}$ & $2$ & $38 = 19\cdot 2$ \\
\bottomrule
\end{tabular}
\caption{\small New lower bounds on~$\alpha(E_{p/q}^{\boxtimes 3})$ obtained via quotients. The lower bound~$\alpha(E_{14/3}^{\boxtimes 3})\geq 84$ matches the known upper bound and is marked with an asterisk. }
\end{table}

\paragraph{Acknowledgements.} PB was supported by the Dutch Research Council (NWO) grant 639.032.614. JZ was supported by the Dutch Research Council (NWO) Veni grant VI.Veni.212.284. This work used the Dutch national e-infrastructure with the support of the SURF Cooperative using grant no.~EINF-4415 and EINF-8278.

\bibliographystyle{alphaurl}
\bibliography{main.bib}
\addcontentsline{toc}{section}{References}
\end{document}